\newtheorem{theorem}{Theorem}
\newtheorem{lemma}{Lemma}
\newtheorem{remark}{Remark}
\newcommand{\R}{\mathbb{R}}
\newcommand{\N}{\mathbb{N}}
\newcommand{\Kreg}{{K_{\mathrm{reg}}}}
\newcommand{\Klog}{{K_{\log}}}
\newcommand{\abs}[1]{\left\lvert{#1}\right\rvert}
\newcommand{\norm}[1]{{\left\lVert{#1}\right\rVert}}
\renewcommand{\vec}[1]{\underline{#1}}
\begin{document}

\title{Integral Representations and Quadrature Schemes for the Modified Hilbert Transformation}
\author{Marco~Zank}
\date{
        Fakult\"at f\"ur Mathematik, Universit\"at Wien, \\
        Oskar-Morgenstern-Platz 1, 1090 Wien, Austria \\[1mm]
        {\tt marco.zank@univie.ac.at}
      }
      
\maketitle

\begin{abstract}
    We present quadrature schemes to calculate matrices, where the so-called modified Hilbert transformation is involved. These matrices occur as temporal parts of Galerkin finite element discretizations of parabolic or hyperbolic problems when the modified Hilbert transformation is used for the variational setting. This work provides the calculation of these matrices to  machine precision for arbitrary polynomial degrees and non-uniform meshes. The proposed quadrature schemes are based on weakly singular integral representations of the modified Hilbert transformation. First, these weakly singular integral representations of the modified Hilbert transformation are proven. Second, using these integral representations, we derive quadrature schemes, which treat the occurring singularities appropriately. Thus, exponential convergence with respect to the number of quadrature nodes for the proposed quadrature schemes is achieved. Numerical results, where this exponential convergence is observed, conclude this work.
\end{abstract}

\section{Introduction}

For the discretization of parabolic or hyperbolic evolution equations, the classical approaches are time stepping schemes together with finite element methods in space, see, e.g., \cite{ThomeeBuch2006} for parabolic and \cite{BangerthGeigerRannacher2010} for hyperbolic problems. An alternative is to discretize the time-dependent problem without separating the temporal and spatial variables, i.e., space-time methods, see, e.g., \cite{SteinbachLangerBuch2019}. In general, the main advantages of space-time methods are space-time adaptivity, space-time parallelization and the treatment of moving boundaries. However, space-time approximation methods depend strongly on the space-time variational formulations on the continuous level. Different variational approaches for parabolic and hyperbolic problems are contained in \cite{Ladyzhenskaya1985, LM11968}.  In recent years, novel variational settings, which use a so-called modified Hilbert transformation $\mathcal H_T$, have been introduced, see \cite{SteinbachZankETNA2020, ZankDissBuch2020} for parabolic problems and see \cite{LoescherSteinbachZankDD, LoescherSteinbachZankWelleTheorie} for hyperbolic problems. Conforming space-time discretizations by piecewise polynomials of these variational formulations lead to huge linear systems. In \cite{LangerZankSISC2021, ZankWaermeLoeserICOSAHOM2022, ZankWelleLoeser}, fast space-time solvers are developed, where some of them allow for an easy parallelization in time. Further extensions for this variational setting, which includes the modified Hilbert transformation $\mathcal H_T$, are an $hp$-FEM in temporal direction and a classical FEM with graded meshes in spatial direction, see \cite{PerugiaSchwabZankhp2022}. All the mentioned discretization schemes ask for an accurate realization of the modified Hilbert transformation $\mathcal H_T$, which acts only on the temporal part, to calculate the corresponding matrices. There are different possibilities of computing the entries of these temporal matrices. In \cite{SteinbachZankJNUM2021}, a weakly singular integral representation of the modified Hilbert transformation $\mathcal H_T$ for sufficiently smooth functions and quadrature schemes are derived, which can be used for discretizations of parabolic problems as in \cite{SteinbachZankETNA2020, ZankDissBuch2020}. However, in \cite{SteinbachZankJNUM2021}, singular integrals are proposed to be calculated analytically, which is not convenient for high-order approaches, as the afore-mentioned $hp$-FEM. In addition, for discretizations of hyperbolic problems as in \cite{LoescherSteinbachZankDD, LoescherSteinbachZankWelleTheorie}, the weakly singular integral representation of the modified Hilbert transformation $\mathcal H_T$, given in \cite{SteinbachZankJNUM2021}, is not applicable. One way out is the approach in \cite{ZankCMAM2021}, which is well-suited for low polynomial degrees. Thus, accurate realizations of the modified Hilbert transformation $\mathcal H_T$ for high polynomial degrees, as needed in an $hp$-FEM, seem not to be available. In this work, we provide such realizations. For this purpose, we prove a new weakly singular integral representation of the modified Hilbert transformation $\mathcal H_T$ for sufficiently smooth functions with jumps and derive quadrature schemes, which allow for the calculation of the involved matrices to high float point accuracy.

In greater detail, for given $T>0$, $\mu \geq 0$ and $f$, we consider the ordinary differential equation (ODE)
\begin{equation} \label{Einf:Waerme}
  \partial_t u(t) + \mu u(t) = f(t) \, \text{ for } t \in (0,T), \quad u(0) = 0,
\end{equation}
which is the temporal part of the heat equation, and the ordinary differential equation
\begin{equation} \label{Einf:Welle}
  \partial_{tt} u(t) + \mu u(t) = f(t) \, \text{ for } t \in (0,T), \quad u(0) = \partial_t u(0) = 0,
\end{equation}
which is the temporal part of the wave equation. Note that $\mu$ plays the role of an eigenvalue of the spatial operator, see \cite{Ladyzhenskaya1985, SteinbachZankETNA2020, ZankDissBuch2020}.

To state related variational formulations to \eqref{Einf:Waerme}, \eqref{Einf:Welle}, we introduce classical Sobolev spaces, where details and further references are contained in \cite{ZankDissBuch2020}. The classical Sobolev space $H^1(0,T)$ is endowed with the norm $\norm{\cdot}_{H^1(0,T)}$. Its closed subspaces
\begin{equation*}
    H^1_{0,}(0,T) = \{ v \in H^1(0,T) : \, v(0)=0 \} \quad \text{ and } \quad H^1_{,0}(0,T) = \{ v \in H^1(0,T) : \, v(T)=0 \}
\end{equation*}
are endowed with the Hilbertian norm $\abs{\cdot}_{H^1(0,T)} := \norm{\partial_t \cdot}_{L^2(0,T)},$ where $\norm{\cdot}_{L^2(0,T)}$ is the usual $L^2(0,T)$ norm and $\partial_t$ denotes the (weak) derivative. Additionally, we have the Sobolev spaces
\begin{align*}
  H^{1/2}_{0,}(0,T) &:=  \left\{ U_{|(0,T)} \colon \, U \in H^{1/2}(-\infty,T) 
  \text{ with } U(t) = 0, \, t < 0 \right\}, \\
  H^{1/2}_{,0}(0,T) &:=  \left\{ U_{|(0,T)} \colon \, U \in H^{1/2}(0,\infty) 
  \text{ with } U(t) = 0, \, t > T \right\}
\end{align*}
with the Hilbertian norms
\begin{align*}
  \| v \|_{H^{1/2}_{0,}(0,T)} := 
  \left( \norm{v}_{H^{1/2}(0,T)}^2 + \int_0^T \frac{\abs{v(t)}^2}{t} \mathrm dt \right)^{1/2}, \\
  \| v \|_{H^{1/2}_{,0}(0,T)} := \left( \norm{v}_{H^{1/2}(0,T)}^2 + \int_0^T 
  \frac{\abs{v(t)}^2}{T-t} \mathrm dt \right)^{1/2},
\end{align*}
where $\norm{\cdot}_{H^{1/2}(J)}$ is a norm, e.g., the Slobodetskii norm, in the usual Sobolev space $H^{1/2}(J)$ for open intervals $J \subset \R$. Further, $\langle \cdot , \cdot \rangle_{(0,T)}$ denotes the duality pairing in $[H^{1/2}_{,0}(0,T)]'$ and $H^{1/2}_{,0}(0,T)$, or in $[H^1_{,0}(0,T)]'$ and $H^1_{,0}(0,T)$, as extension of the inner product $\langle \cdot , \cdot \rangle_{L^2(0,T)}$ in $L^2(0,T).$ Last, we define the modified Hilbert transformation
${\mathcal{H}}_T\colon \, L^2(0,T) \to L^2(0,T)$ as
\begin{equation}\label{Einf:HT}
  ({\mathcal{H}}_Tv)(t) = \sum_{k=0}^\infty v_k
  \cos \left( \left( \frac{\pi}{2} + k\pi \right) \frac{t}{T} \right), \quad t \in (0,T),
\end{equation}
where
\begin{equation*}
  v_k = \frac{2}{T} \int_0^T v(t) \sin \left( \left( \frac{\pi}{2} + k\pi \right) \frac{t}{T} \right) \mathrm dt 
\end{equation*}
are the Fourier coefficients of the series representation
\begin{equation*}
  v(t) = \sum_{k=0}^\infty v_k
  \sin \left( \left( \frac{\pi}{2} + k\pi \right) \frac{t}{T} \right) \mathrm dt, 
  \quad t \in (0,T),
\end{equation*}
when $v \in L^2(0,T)$ is given. The modified Hilbert transformation
${\mathcal{H}}_T$ is a bijective mapping ${\mathcal{H}}_T\colon \, H^\nu_{0,}(0,T) \to H^\nu_{\,,0}(0,T)$ for $\nu \in \{0, 1/2, 1\}$ with $H^0_{0,}(0,T)=H^0_{,0}(0,T)=L^2(0,T)$. Additional properties of $\mathcal H_T$ are given in \cite{SteinbachZankETNA2020, SteinbachZankJNUM2021, ZankCMAM2021, SteinbachMissoni2022}.

With this notation, a related variational formulation to the parabolic-type problem~\eqref{Einf:Waerme} is to find 
$u \in H^{1/2}_{0,}(0,T)$ such that
\begin{equation} \label{Einf:WaermeVF}
  \forall v \in H^{1/2}_{0,}(0,T): \quad   \langle \partial_t u , {\mathcal{H}}_T v \rangle_{(0,T)} + \mu \langle u , {\mathcal{H}}_T v \rangle_{L^2(0,T)} = \langle f , {\mathcal{H}}_T v \rangle_{(0,T)}
\end{equation}
for a given right-hand side $f \in [H^{1/2}_{,0}(0,T)]'$. For the hyperbolic-type problem~\eqref{Einf:Welle}, a related variational formulation is to find 
$u \in H^1_{0,}(0,T)$ such that
\begin{equation} \label{Einf:WelleVF}
  \forall v \in H^1_{0,}(0,T): \quad   \langle {\mathcal{H}}_T \partial_t u , \partial_t v \rangle_{L^2(0,T)} + \mu \langle u , {\mathcal{H}}_T v \rangle_{L^2(0,T)} = \langle f , {\mathcal{H}}_T v \rangle_{(0,T)}
\end{equation}
for a given right-hand side $f \in [H^1_{,0}(0,T)]'$. The variational formulations~\eqref{Einf:WaermeVF}, \eqref{Einf:WelleVF} are uniquely solvable, where their derivation and analysis is given in \cite{SteinbachZankETNA2020, ZankDissBuch2020, LoescherSteinbachZankDD}.

Next, we state conforming discretizations of the variational formulations~\eqref{Einf:WaermeVF}, \eqref{Einf:WelleVF}. For this purpose, we define a partition ${\mathcal T}^N= \{\tau_\ell\}_{\ell=1}^N$ of $(0,T)$ for given $N\in \N$, i.e.,
\begin{equation} \label{Einf:Netz}
    0 = t_0 < t_1 < t_2 < \dots < t_{N -1} < t_{N} = T
\end{equation}
with elements $\tau_\ell := (t_{\ell-1},t_\ell) \subset \R$. The mesh sizes are $h_\ell := t_\ell - t_{\ell-1}$ for $\ell=1,\dots,N$, and the maximal mesh size is $h:=\max_\ell h_\ell$. On ${\mathcal T}^N$, with the distribution $\boldsymbol{p} = (p_1,\dots,p_N) \in \N^N$ of polynomial degrees, we introduce the space of piecewise polynomial, continuous functions on intervals
\begin{equation}\label{Einf:DefStwPolynome}
  S^{\boldsymbol{p}}({\mathcal T}^N) := \{ v \in C[0,T]: \, \forall \ell \in \{1,\dots,N\} : v_{|{\tau_\ell}} \in \mathbb{P}^{p_\ell}(\tau_\ell) \} = \mathrm{span} \{\varphi_i \}_{i=1}^M,
\end{equation}
where $C[0,T]$ is the space of continuous functions and $\mathbb P^p(A)$ is the space of polynomials on a subset $A \subset \R^d$, $d \in \N$, of global degree at most $p \in \N$. Here, $M = \sum_{\ell=1}^N (p_\ell + 1) - (N -1) = 1 + \sum_{\ell=1}^N p_\ell$ is the number of degrees of freedom and the functions $\varphi_i \colon \, [0,T] \to \R$ are basis functions of $S^{\boldsymbol{p}}({\mathcal T}^N)$, satisfying
\begin{equation} \label{Einf:BasisVarphi}
  \forall j \in \{ 2, \dots, M\} : \quad \varphi_j(0) = 0,
\end{equation}
i.e., $\varphi_1(0) \neq 0.$ Further, we define the subspace
\begin{equation*}
  S^{\boldsymbol{p}}_{0,}({\mathcal T}^N) := S^{\boldsymbol{p}}({\mathcal T}^N) \cap H^1_{0,}(0,T) = \mathrm{span}\{\varphi_{j+1}\}_{j=1}^{M-1}.
\end{equation*}
Thus, a conforming finite element method of the variational formulation~\eqref{Einf:WaermeVF} is to find $u_h \in S^{\boldsymbol{p}}_{0,}({\mathcal T}^N)$ such that
\begin{equation} \label{Einf:WaermeVFDisk}
   \forall v_h \in S^{\boldsymbol{p}}_{0,}({\mathcal T}^N): \quad  \langle \partial_t u_h , {\mathcal{H}}_T v_h \rangle_{L^2(0,T)} + \mu \langle u_h , {\mathcal{H}}_T v_h \rangle_{L^2(0,T)} = \langle f , {\mathcal{H}}_T v_h \rangle_{(0,T)}.
\end{equation}
The discrete variational formulation~\eqref{Einf:WaermeVFDisk} is uniquely solvable, see \cite{SteinbachZankETNA2020, ZankDissBuch2020}, and is equivalent to the linear system
\begin{equation} \label{Einf:LGSWaerme}
    (\widetilde A^{\mathcal H_T} + \mu \widetilde M^{\mathcal H_T}) \vec u = \vec{F}^{\mathcal H_T}
\end{equation}
with the matrices
\begin{align}
    \widetilde M^{\mathcal H_T}[i,j] :=& \langle \varphi_{j+1}, \mathcal H_T \varphi_{i+1} \rangle_{L^2(0,T)}, \label{Einf:MhTilde} \\
    \widetilde A^{\mathcal H_T}[i,j] :=& \langle \partial_t \varphi_{j+1}, \mathcal H_T \varphi_{i+1} \rangle_{L^2(0,T)}  \label{Einf:AhTilde}
\end{align}
for $i,j=1,\dots,M-1,$ and the corresponding right-hand side $\vec{F}^{\mathcal H_T}$.

Analogously, the conforming finite element method of the variational formulation~\eqref{Einf:WelleVF} to find $u_h \in S^{\boldsymbol{p}}_{0,}({\mathcal T}^N)$ such that
\begin{equation*}
   \forall v_h \in S^{\boldsymbol{p}}_{0,}({\mathcal T}^N): \quad  \langle {\mathcal{H}}_T\partial_t u_h , \partial_t  v_h \rangle_{L^2(0,T)} + \mu \langle u_h , {\mathcal{H}}_T v_h \rangle_{L^2(0,T)} = \langle f , {\mathcal{H}}_T v_h \rangle_{(0,T)}
\end{equation*}
is uniquely solvable, see \cite{LoescherSteinbachZankDD, LoescherSteinbachZankWelleTheorie}, and is equivalent to the linear system
\begin{equation*}
    ((\widetilde B^{\mathcal H_T})^\top + \mu \widetilde M^{\mathcal H_T}) \vec u = \vec{F}^{\mathcal H_T}
\end{equation*}
with the matrix $\widetilde M^{\mathcal H_T}$, given in \eqref{Einf:MhTilde}, and the matrix
\begin{equation}  \label{Einf:BhTilde}
    \widetilde B^{\mathcal H_T}[i,j] := \langle \partial_t \varphi_{j+1}, \mathcal H_T \partial_t \varphi_{i+1} \rangle_{L^2(0,T)}
\end{equation}
for $i,j=1,\dots,M-1,$ and the right-hand side $\vec{F}^{\mathcal H_T}$ as for the linear system~\eqref{Einf:LGSWaerme}. Note that for $f \in L^2(0,T)$, this right-hand side $\vec{F}^{\mathcal H_T}$ can be realized, using the matrix
\begin{equation}  \label{Einf:Mh}
    M^{\mathcal H_T}[i,j] := \langle \varphi_j, \mathcal H_T \varphi_i \rangle_{L^2(0,T)}
\end{equation}
for $i,j=1,\dots,M$, see \cite{ZankCMAM2021, ZankWaermeLoeserICOSAHOM2022} for all the details. Additionally, we set
\begin{align}
    A^{\mathcal H_T}[i,j] :=& \langle \partial_t \varphi_j, \mathcal H_T \varphi_i \rangle_{L^2(0,T)}, \label{Einf:Ah} \\
    B^{\mathcal H_T}[i,j] :=& \langle \partial_t \varphi_j, \mathcal H_T \partial_t \varphi_i \rangle_{L^2(0,T)}  \label{Einf:Bh}
\end{align}
for $i,j=1,\dots,M$. Hence, the matrices $\widetilde M^{\mathcal H_T}$, $\widetilde A^{\mathcal H_T}$, $\widetilde B^{\mathcal H_T}$, given in \eqref{Einf:MhTilde}, \eqref{Einf:AhTilde}, \eqref{Einf:BhTilde}, are submatrices of the matrices $M^{\mathcal H_T}$, $A^{\mathcal H_T}$, $B^{\mathcal H_T}$, given in \eqref{Einf:Mh}, \eqref{Einf:Ah}, \eqref{Einf:Bh}, respectively. Thus, we ask for an accurate realization of these matrices $M^{\mathcal H_T}$, $A^{\mathcal H_T}$, $B^{\mathcal H_T}$, which is the main topic of this manuscript.

The rest of the paper is organized as follows: In Section~\ref{sec:HT}, we recall well-known and prove new weakly singular integral representations of $\mathcal H_T$, which are the starting point of the calculation of the matrices $M^{\mathcal H_T}$, $A^{\mathcal H_T}$, $B^{\mathcal H_T}$. In Section~\ref{sec:Int}, Gauß quadratures are given, which are used in the following sections. Section~\ref{sec:NumHT} is the main part of this paper, where we state all quadrature schemes to calculate the matrices $M^{\mathcal H_T}$, $A^{\mathcal H_T}$, $B^{\mathcal H_T}$ to high accuracy. In Section~\ref{sec:Num}, numerical examples show the quality of the new assembling method of $M^{\mathcal H_T}$, $A^{\mathcal H_T}$, $B^{\mathcal H_T}$. In Section~\ref{sec:Zum}, we give some conclusions.

\section{Integral Representations of the Modified Hilbert Transformation} \label{sec:HT}

In this section, we recall a weakly singular integral representation of $\mathcal H_T$ for functions in $H^1(0,T)$ and prove a new weakly singular integral representation of $\mathcal H_T$ for functions, which are only piecewise in $H^1$. These weakly singular integral representations are used for the calculation of the matrices $M^{\mathcal H_T}$, $A^{\mathcal H_T}$, $B^{\mathcal H_T}$, given in \eqref{Einf:Mh}, \eqref{Einf:Ah}, \eqref{Einf:Bh}, respectively. We start to
recall the integral representations for functions in $L^2(0,T)$ or $H^1(0,T)$, which is proven in \cite{SteinbachZankJNUM2021}:

\begin{lemma}[Lemma~2.1 in \cite{SteinbachZankJNUM2021}] \label{HT:Lem:L2H1}
 For $v \in L^2(0,T),$ the operator ${\mathcal{H}}_T$, as defined in 
  \eqref{Einf:HT}, allows the integral representation
  \begin{equation}\label{HT:Hilbert Transformation Integral_L2}
  ({\mathcal{H}}_T v)(t) = {\mathrm  {v.p.}} \int_0^T  v(s) \, K(s,t) \, \mathrm ds, \quad t \in (0,T),
  \end{equation}
  as a Cauchy principal value integral, where the kernel function is given as
  \begin{equation*}
    K(s,t) := \frac{1}{2T} \left[  \frac{1}{\sin \frac{\pi(s+t)}{2T}  }  + \frac{1}{\sin \frac{\pi(s-t)}{2T}  }   \right].
  \end{equation*}
 For $v \in H^1(0,T),$ the operator ${\mathcal{H}}_T$, as defined in 
  \eqref{Einf:HT}, allows the integral representation
    \begin{equation}\label{HT:Hilbert Transformation Integral_H1}
    ({\mathcal{H}}_Tv)(t) =
    - \frac{2}{\pi} v(0) \ln \tan \frac{\pi t}{4T} + \int_0^{T} \partial_t v(s) \mathcal K(s,t) \mathrm ds, \quad t \in (0,T),
    \end{equation}
  as a weakly singular integral, where
  \begin{equation} \label{HT:Kern}
      \mathcal K(s,t) := -\frac{1}{\pi} \ln \left[ \tan \frac{\pi (s+t)}{4T} \tan \frac{\pi \abs{t-s}}{4T} \right].
  \end{equation}
\end{lemma}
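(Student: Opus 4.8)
The plan is to derive both representations directly from the spectral definition~\eqref{Einf:HT}: first extract a kernel series and sum it in closed form to get~\eqref{HT:Hilbert Transformation Integral_L2}, and then obtain the $H^1$ formula~\eqref{HT:Hilbert Transformation Integral_H1} by integration by parts in $s$.

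For the first representation I would insert the Fourier coefficient $v_k$ into the series~\eqref{Einf:HT} and formally interchange summation and integration, which produces the kernel
\begin{equation*}
  K(s,t) = \frac{2}{T}\sum_{k=0}^\infty \sin\left(\lambda_k \tfrac{s}{T}\right)\cos\left(\lambda_k \tfrac{t}{T}\right), \qquad \lambda_k := \frac{\pi}{2}+k\pi.
\end{equation*}
Applying the product-to-sum identity $2\sin a\cos b=\sin(a+b)+\sin(a-b)$ with $a=\lambda_k s/T$, $b=\lambda_k t/T$ splits this into two series of the form $\sum_{k=0}^\infty \sin\big((2k+1)\theta\big)$, where $\theta=\frac{\pi(s+t)}{2T}$ and $\theta=\frac{\pi(s-t)}{2T}$, respectively. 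The decisive step is the (Abel-summed) evaluation
\begin{equation*}
  \sum_{k=0}^\infty \sin\big((2k+1)\theta\big) = \frac{1}{2\sin\theta},
\end{equation*}
which I would justify by computing $\mathrm{Im}\sum_{k=0}^\infty (re^{i\theta})^{2k+1}=\mathrm{Im}\,\frac{re^{i\theta}}{1-r^2e^{2i\theta}}$ and letting $r\to 1^-$. Substituting the two values of $\theta$ yields exactly the claimed $K(s,t)$, up to the prefactor $\frac{1}{T}\cdot\frac12=\frac{1}{2T}$. Since the difference term is singular along $s=t$, the interchange of limit and integration is legitimate only in the principal-value sense; I would make this rigorous by keeping the Abel factor $r<1$, integrating, and passing to the limit afterwards, which accounts for the $\mathrm{v.p.}$ in~\eqref{HT:Hilbert Transformation Integral_L2}.

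For the second representation I would start from~\eqref{HT:Hilbert Transformation Integral_L2} and integrate by parts. Using $\int\csc u\,\mathrm du=\ln\abs{\tan\tfrac{u}{2}}$, the function
\begin{equation*}
  \widetilde{\mathcal K}(s,t) = \frac{1}{\pi}\ln\abs{\tan\tfrac{\pi(s+t)}{4T}\,\tan\tfrac{\pi(s-t)}{4T}}
\end{equation*}
is an antiderivative of $K(\cdot,t)$ in $s$, and $\mathcal K=-\widetilde{\mathcal K}$ is precisely the kernel~\eqref{HT:Kern} once one notes $\abs{\tan\tfrac{\pi(s-t)}{4T}}=\tan\tfrac{\pi\abs{t-s}}{4T}$ for $\abs{s-t}<T$. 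Integration by parts gives $[v\,\widetilde{\mathcal K}]_0^T-\int_0^T \partial_s v\,\widetilde{\mathcal K}\,\mathrm ds$. For the boundary terms I would use $\tan(\tfrac{\pi}{4}+\alpha)\tan(\tfrac{\pi}{4}-\alpha)=1$ with $\alpha=\tfrac{\pi t}{4T}$, which forces $\widetilde{\mathcal K}(T,t)=0$, while at $s=0$ the product of tangents equals $-\tan^2\tfrac{\pi t}{4T}$, so $\widetilde{\mathcal K}(0,t)=\tfrac{2}{\pi}\ln\tan\tfrac{\pi t}{4T}$, producing the leading term $-\tfrac{2}{\pi}v(0)\ln\tan\tfrac{\pi t}{4T}$ of~\eqref{HT:Hilbert Transformation Integral_H1}; the surviving integral is the weakly singular one with kernel $\mathcal K=-\widetilde{\mathcal K}$.

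The main obstacle is justifying the integration by parts across the singularity at $s=t$: the antiderivative $\widetilde{\mathcal K}$ carries a logarithmic (hence integrable) singularity there, whereas $K$ itself is only principal-value integrable. I would handle this by excising a symmetric interval $(t-\varepsilon,t+\varepsilon)$, integrating by parts on the two remaining pieces, and letting $\varepsilon\to 0^+$. The odd part of the boundary contributions at $t\pm\varepsilon$ cancels in the limit, exactly converting the principal-value integral into an ordinary Lebesgue integral against the log kernel, which converges for $v\in H^1(0,T)$ because $\partial_s v\in L^2(0,T)$ and $\widetilde{\mathcal K}(\cdot,t)$ is weakly singular. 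This cancellation, together with the Abel-summation step above, is where the care is needed; the remaining manipulations are routine trigonometric identities.
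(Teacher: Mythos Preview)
The paper does not actually prove this lemma: it is quoted verbatim from \cite{SteinbachZankJNUM2021} and introduced with ``which is proven in \cite{SteinbachZankJNUM2021}'', so there is no in-paper argument to compare against. Your proposal supplies precisely the standard derivation one would expect in that reference: Abel-summation of the kernel series $\sum_k \sin((2k+1)\theta)=\tfrac{1}{2\sin\theta}$ to obtain the closed form of $K$, followed by an excise-and-pass-to-the-limit integration by parts (using $-\partial_s\mathcal K=K$) to upgrade the principal-value representation to the weakly singular one for $v\in H^1(0,T)$.

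All the ingredients you identify are correct. In particular, the boundary evaluation $\widetilde{\mathcal K}(T,t)=0$ via $\tan(\tfrac{\pi}{4}+\alpha)\tan(\tfrac{\pi}{4}-\alpha)=1$ and $\widetilde{\mathcal K}(0,t)=\tfrac{2}{\pi}\ln\tan\tfrac{\pi t}{4T}$ are right, and your handling of the interior boundary contributions at $s=t\pm\varepsilon$ is exactly the mechanism used in the paper's own Lemma~\ref{HT:Lem:Darstellung} (the H\"older continuity of $v$ kills $[v(t\pm\varepsilon)-v(t)]\cdot O(\ln\varepsilon)$, and the symmetric logarithmic divergences cancel). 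One small refinement: rather than saying ``the odd part cancels'', it is cleaner to split $v(t+\varepsilon)\widetilde{\mathcal K}(t+\varepsilon,t)-v(t-\varepsilon)\widetilde{\mathcal K}(t-\varepsilon,t)$ into $v(t)[\widetilde{\mathcal K}(t+\varepsilon,t)-\widetilde{\mathcal K}(t-\varepsilon,t)]$ plus H\"older-small remainders, since the two logarithms are not exactly odd in $\varepsilon$ (the $s+t$ factor differs), but their difference is $O(\varepsilon)$ and hence harmless.
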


The integral representation~\eqref{HT:Hilbert Transformation Integral_H1} in connection with specialized numerical integration gives the possibility to calculate matrices, which are based on $\mathcal H_T$ applied to a function in $H^1(0,T)$, e.g., the matrices $M^{\mathcal H_T}$ in \eqref{Einf:Mh} and $A^{\mathcal H_T}$ in \eqref{Einf:Ah}. However, the integral representation~\eqref{HT:Hilbert Transformation Integral_H1} is not applicable for functions, which have jumps, since these functions do not belong to $H^1(0,T)$. As the integral representation~\eqref{HT:Hilbert Transformation Integral_L2} is a Cauchy principal value integral and therefore, is complicated to use in practical implementations, we prove a new integral representation of $\mathcal H_T$ for functions, which are piecewise in $H^1$ but possibly having jumps. This integral representation is used for the calculation of the matrix $B^{\mathcal H_T}$ in \eqref{Einf:Bh}.

\begin{lemma} \label{HT:Lem:Darstellung}
  Let $a,b \in [0,T]$ with $a < b$ and a function $f \in H^1(a,b) \subset C[a,b]$ be given. Then, for the function $v_f \in L^2(0,T),$
  \begin{equation*}
    v_f(s) = \begin{cases}
                f(s), & s \in [a,b], \\
                0, & \text{otherwise,}
             \end{cases}
  \end{equation*}
  the operator ${\mathcal{H}}_T$, as defined in \eqref{Einf:HT}, allows the integral representation
    \begin{equation}\label{HT:Darstellung_HT}
    ({\mathcal{H}}_Tv_f)(t) = -f(b) \mathcal K(b,t) + f(a) \mathcal K(a,t) + \int_a^b \partial_t f(s) \mathcal K(s,t) \mathrm ds
    \end{equation}
  for $t \in (0,T) \setminus \{a, b\}$ as a weakly singular integral, where the kernel function $\mathcal K(s,t)$ is given in \eqref{HT:Kern}.
  
  If, in addition, $f(a) = 0$, i.e., $f \in H^1_{0,}(a,b)$, then the representation
  \begin{equation*}
   ({\mathcal{H}}_Tv_f)(a) = -f(b) \mathcal K(b,a) + \int_a^b \partial_t f(s) \mathcal K(s,a) \mathrm ds
  \end{equation*}
  holds true as a weakly singular integral. Analogously, if $f(b) = 0$, i.e., $f \in H^1_{,0}(a,b)$, then we have the representation
  \begin{equation*}
   ({\mathcal{H}}_Tv_f)(b) = f(a) \mathcal K(a,b) + \int_a^b \partial_t f(s) \mathcal K(s,b) \mathrm ds
  \end{equation*}
  as a weakly singular integral.
\end{lemma}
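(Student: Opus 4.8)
The plan is to derive \eqref{HT:Darstellung_HT} from the Cauchy principal value representation \eqref{HT:Hilbert Transformation Integral_L2} by integration by parts, exploiting that the two kernels in Lemma~\ref{HT:Lem:L2H1} are linked by an antiderivative relation. A direct computation from \eqref{HT:Kern} shows that, for $s \neq t$,
\[
  \partial_s \mathcal K(s,t) = -K(s,t),
\]
which is precisely the identity underlying the passage from \eqref{HT:Hilbert Transformation Integral_L2} to \eqref{HT:Hilbert Transformation Integral_H1}: the absolute value $\abs{t-s}$ produces opposite signs on the two sides of $s=t$, but in both cases reproduces $-K(s,t)$ since $\sin$ is odd. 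Because $v_f$ vanishes outside $[a,b]$, the representation \eqref{HT:Hilbert Transformation Integral_L2} reduces to $({\mathcal{H}}_T v_f)(t) = {\mathrm{v.p.}} \int_a^b f(s) K(s,t) \, \mathrm ds$ for $t \in (0,T)$.

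First I would treat $t \in (0,T) \setminus [a,b]$, where $K(\cdot,t)$ has no singularity on $[a,b]$, so the principal value is an ordinary integral and a single integration by parts in $s$, using $\partial_s \mathcal K = -K$, produces the boundary terms $-f(b)\mathcal K(b,t) + f(a)\mathcal K(a,t)$ together with the integral $\int_a^b \partial_t f(s)\mathcal K(s,t)\,\mathrm ds$ of \eqref{HT:Darstellung_HT} (here $\partial_t f$ denotes the derivative of $f$). The substantial case is $t \in (a,b)$: here I would write the principal value as $\lim_{\varepsilon \to 0^+}\bigl(\int_a^{t-\varepsilon} + \int_{t+\varepsilon}^{b}\bigr)$ and integrate by parts on each piece. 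The outer endpoints again yield $-f(b)\mathcal K(b,t)+f(a)\mathcal K(a,t)$, the remaining integrals converge to $\int_a^b \partial_t f(s)\mathcal K(s,t)\,\mathrm ds$ since $\mathcal K(\cdot,t)$ is only logarithmically (hence weakly) singular at $s=t$ and $\partial_t f \in L^2(a,b)$, and there survive the two inner boundary contributions $f(t+\varepsilon)\mathcal K(t+\varepsilon,t) - f(t-\varepsilon)\mathcal K(t-\varepsilon,t)$.

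The main obstacle is to show that these inner contributions vanish as $\varepsilon \to 0^+$, and this is exactly where the weakly singular (rather than strongly singular) nature of $\mathcal K$ is essential. From \eqref{HT:Kern} one has $\mathcal K(t\pm\varepsilon,t) = -\tfrac1\pi \ln\tan\tfrac{\pi\varepsilon}{4T} + O(1)$, so the divergent $\ln\tfrac1\varepsilon$ parts are symmetric in $\pm\varepsilon$ and cancel in the difference; writing $f(t\pm\varepsilon) = f(t) + \bigl(f(t\pm\varepsilon)-f(t)\bigr)$ and using the one-dimensional embedding $H^1(a,b) \hookrightarrow C^{0,1/2}[a,b]$ gives $\abs{f(t\pm\varepsilon)-f(t)} \le C\varepsilon^{1/2}$, so the residual is $O(\varepsilon^{1/2}\ln\tfrac1\varepsilon)\to 0$, while the symmetric leading term is multiplied by $f(t)$ times a difference tending to $0$. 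This establishes \eqref{HT:Darstellung_HT} for all $t \in (0,T)\setminus\{a,b\}$.

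Finally, for the endpoint cases I would pass to the limit $t \to a^+$ (respectively $t\to b^-$) in \eqref{HT:Darstellung_HT}. When $f(a)=0$ the term $f(a)\mathcal K(a,t)$ is identically zero, so the only potential singularity of the right-hand side at $t=a$ disappears: $\mathcal K(b,\cdot)$ is continuous near $a$ and the weakly singular integral depends continuously on $t$ up to $t=a$, whence the right-hand side extends continuously and yields the stated value of $({\mathcal{H}}_T v_f)(a)$. Equivalently, one may integrate by parts directly in the one-sided principal value at $t=a$, where the single inner boundary term $f(a+\varepsilon)\mathcal K(a+\varepsilon,a) = O(\varepsilon^{1/2}\ln\tfrac1\varepsilon)$ vanishes precisely because $f(a)=0$ forces $\abs{f(a+\varepsilon)} \le C\varepsilon^{1/2}$. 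The case $f(b)=0$ at $t=b$ is completely analogous, the surviving boundary term then being $+f(a)\mathcal K(a,b)$.
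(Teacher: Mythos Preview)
Your proposal is correct and follows essentially the same approach as the paper: both use the antiderivative relation $\partial_s \mathcal K(s,t) = -K(s,t)$, split into the cases $t \notin [a,b]$, $t \in (a,b)$, and the endpoint cases, integrate by parts in the principal value, and control the inner boundary contributions via the embedding $H^1(a,b) \hookrightarrow C^{0,1/2}[a,b]$. For the endpoint cases the paper uses precisely your ``equivalently'' route, i.e., integration by parts directly in the one-sided principal value with $f(a+\varepsilon)\mathcal K(a+\varepsilon,a) \to 0$ by H\"older continuity; your first suggestion of passing to the limit $t \to a^+$ would additionally require justifying continuity of $({\mathcal H}_T v_f)$ at $t=a$, so the direct argument is cleaner.
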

\begin{proof}
    Let $t \in (0,T)$ be arbitrary but fixed. We split the proof accordingly to the relation between $t$, $a$ and $b$.
    
    First, consider the case $t \in (a,b)$. The integral representation~\eqref{HT:Hilbert Transformation Integral_L2}, integration by parts, the relation $-\partial_s \mathcal K(s,t) = K(s,t)$ and the 
    Hölder continuity of $f$ with exponent $\frac 1 2$, see \cite[Chapitre~2, Théorème~3.8] {Necas1967}, yield
    \begin{align*}
      ({\mathcal{H}}_Tv_f)(t) &= \lim_{\varepsilon \searrow 0} \left( \int_a^{t-\varepsilon} f(s) K(s,t) \mathrm ds +  \int_{t+\varepsilon}^b f(s) K(s,t) \mathrm ds \right) \\
      &= \lim_{\varepsilon \searrow 0} \left( -f(t-\varepsilon) \mathcal K(t-\varepsilon,t) + f(a) \mathcal K(a,t) +  \int_a^{t-\varepsilon} \partial_t f(s) \mathcal K(s,t) \mathrm ds \right. \\
      &\qquad -\left. f(b) \mathcal K(b,t) + f(t+\varepsilon) \mathcal K(t+\varepsilon,t) + \int_{t+\varepsilon}^b \partial_t f(s) \mathcal K(s,t) \mathrm ds \right) \\
      &= f(a) \mathcal K(a,t) - f(b) \mathcal K(b,t)  + \int_a^b \partial_t f(s) \mathcal K(s,t) \mathrm ds,
    \end{align*}
    where the last integral exists as a weakly singular integral.
    
    Second, we examine the case $t \in (0,a)$ when $a>0$, or $t \in (b,T)$ when $b<T$. The integral representation~\eqref{HT:Hilbert Transformation Integral_L2} and integration by parts give
    \begin{equation*}
      ({\mathcal{H}}_Tv_f)(t) = \lim_{\varepsilon \searrow 0} \int_a^b f(s) K(s,t) \mathrm ds = f(a) \mathcal K(a,t) - f(b) \mathcal K(b,t)  + \int_a^b \partial_t f(s) \mathcal K(s,t) \mathrm ds.
    \end{equation*}
    
    Third, the case $t=a$ is investigated when $f(a) = 0$. As in the cases before, the integral representation~\eqref{HT:Hilbert Transformation Integral_L2}, the Hölder continuity of $f$ and integration by parts lead to
    \begin{align*}
      ({\mathcal{H}}_Tv_f)(a) &= \lim_{\varepsilon \searrow 0} \int_{a+\varepsilon}^b f(s) K(s,a) \mathrm ds \\
      &= \lim_{\varepsilon \searrow 0} \left( f(a+\varepsilon) \mathcal K(a+\varepsilon,a) - f(b) \mathcal K(b,a)  + \int_a^b \partial_t f(s) \mathcal K(s,a) \mathrm ds \right) \\
      &= - f(b) \mathcal K(b,a)  + \int_a^b \partial_t f(s) \mathcal K(s,a) \mathrm ds,
    \end{align*}
    where the last integral exists as a weakly singular integral.
    
    Last, the case $t=b$, when $f(b)=0$, is proven analogously.
\end{proof}

\begin{remark}
 Choose $a=0$ and $b=T$ in Lemma~\ref{HT:Lem:Darstellung}. Then, the representation~\eqref{HT:Darstellung_HT} of Lemma~\ref{HT:Lem:Darstellung} coincides with the representation~\eqref{HT:Hilbert Transformation Integral_H1} of Lemma~\ref{HT:Lem:L2H1} due to $\mathcal K(0,t) = -\frac{2}{\pi} \ln \tan \frac{\pi t}{4T}$ and $\mathcal K(T,t) = 0$ for $t \in (0,T).$
\end{remark}

\begin{remark}
 Consider the situation of Lemma~\ref{HT:Lem:Darstellung}. For $f \in H^1(a,b)$ with $f(a) \neq 0$ or $f(b)\neq 0$, the function $\mathcal H_T v_f \in L^2(0,T)$ admits a singularity for $t=a$ or $t=b$, respectively.
\end{remark}

\section{Gauß Quadratures} \label{sec:Int}

In this section, Gauß quadratures are stated, which are used in the following sections. For this purpose, we introduce Gauß quadratures on $[0,1]$ of order $K \in \N$, see \cite{Gautschi2012Log}. First, the classical Gauß--Legendre quadrature with weights $\omega_{\nu,K} \in \R$ and nodes $\xi_{\nu,K} \in [0,1]$, $\nu=1,\dots,K$, fulfills
\begin{equation} \label{Int:GaussLegendre}
  \forall g \in \mathbb{P}^{2K-1}[0,1] : \quad  \int_0^1 g(t) \mathrm dt = \sum_{\nu=1}^K \omega_{\nu,K} g(\xi_{\nu,K}),
\end{equation}
see \cite[(1.6)]{Gautschi2012Log}. 
This Gauß quadrature is generalized to the square domain $[0,1] \times [0,1]$ via using the Gauß--Legendre quadrature~\eqref{Int:GaussLegendre} for each coordinate direction. More precisely, the tensor Gauß quadrature 
\begin{equation} \label{Int:GaussTensor}
 \sum_{\nu_1=1}^{K_1}  \sum_{\nu_2=1}^{K_2} \omega_{\nu_1,K_1} \omega_{\nu_2,K_2} G(\xi_{\nu_1,K_1}, \xi_{\nu_2,K_2}),
\end{equation}
approximates the integral
\begin{equation*}
  \int_0^1 \int_0^1 G(\xi_1,\xi_2) \, \mathrm d \xi_1 \mathrm d \xi_2
\end{equation*}
for a function $G\colon \, [0,1] \times [0,1] \to \R$, where $\omega_{\nu_i,K_i} \in \R$ and $\xi_{\nu_i,K_i} \in [0,1]$ are the Gauß integration weights and Gauß integration nodes of order $K_i \in \mathbb N$  with respect to the coordinate direction $i \in \{1,2\}$.

For singular integrals with a logarithmic term, we introduce a nonclassical Gauß--Jacobi quadrature. In greater detail, for $K \in \N$, the weights $\hat \omega_{\nu,K} \in \R$ and nodes $\hat \xi_{\nu,K} \in [0,1]$, $\nu=1,\dots,K$, of the Gauß--Jacobi quadrature for a logarithmic term satisfy
\begin{equation} \label{Int:Gausslog}
  \forall g \in \mathbb{P}^{2K-1}[0,1] : \quad  -\int_0^1 g(t) \ln(t) \mathrm dt = \sum_{\nu=1}^K \hat \omega_{\nu,K} g(\hat \xi_{\nu,K}),
\end{equation}
see \cite[(1.7)]{Gautschi2012Log}.
With the quadrature \eqref{Int:Gausslog}, the equality
\begin{multline} \label{Int:GaussLogTensor}
    \int_0^1 \int_0^1 g(s,t) \ln \abs{s-t} \mathrm ds \mathrm dt = \\
    -\sum_{\mu=1}^{K}\sum_{\nu=1}^{K} \hat \omega_{\mu}  \hat \xi_{\mu} \omega_{\nu} \left(  g((1-\xi_{\nu})\hat \xi_{\mu}, \hat \xi_{\mu}) +  g(1-(1-\xi_{\nu})\hat \xi_{\mu},1-\hat \xi_{\mu})  \right) \\
    - \sum_{\mu=1}^{K}\sum_{\nu=1}^{K} \omega_{\mu} \xi_{\mu} \hat \omega_{\nu} \left(  g((1-\hat \xi_{\nu})\xi_{\mu}, \xi_{\mu}) +  g(1-(1-\hat \xi_{\nu}) \xi_{\mu},1- \xi_{\mu})  \right)
\end{multline}
holds true for all $g \in \mathbb{P}^{2K-2}([0,1]\times[0,1])$, where
\begin{equation*}
  \omega_\nu = \omega_{\nu,K}, \quad \hat \omega_\nu = \hat \omega_{\nu,K}, \quad \xi_\nu = \xi_{\nu,K}, \quad \hat \xi_\nu = \hat \xi_{\nu,K}
\end{equation*}
are defined in \eqref{Int:GaussLegendre}, \eqref{Int:Gausslog}, see \cite[(2.7)]{Gautschi2012Log}, \cite{Gautschi2012LogErratum}.

\section{Quadrature Schemes for the Modified Hilbert Transformation} \label{sec:NumHT}

In this section, we describe the assembling of the matrices $M^{\mathcal H_T}$ in \eqref{Einf:Mh}, $A^{\mathcal H_T}$ in \eqref{Einf:Ah} and $B^{\mathcal H_T}$ in \eqref{Einf:Bh}. The crucial
point is the realization of the modified Hilbert transformation $\mathcal H_T$,
where different possibilities exist, see
\cite{SteinbachZankJNUM2021, ZankCMAM2021}. 
In particular, for
a uniform degree vector $\boldsymbol{p} = (p, p, \dots, p)$ with a fixed, low
polynomial degree $p \in \N$, e.g., $p=1$ or $p=2$, the matrices $M^{\mathcal H_T}$, $A^{\mathcal H_T}$ and $B^{\mathcal H_T}$ in \eqref{Einf:Mh}, \eqref{Einf:Ah}, \eqref{Einf:Bh}, respectively, can be calculated using a
series expansion based on the \textit{Legendre chi function}, which
converges very fast, independently of the mesh sizes, see
\cite[Subsection~2.2]{ZankCMAM2021}. As for an
$hp$-FEM, the degree vector $\boldsymbol{p}$ is not uniform or for high polynomial degrees, it is convenient to apply numerical quadrature rules to numerically approximate the matrix entries of $M^{\mathcal H_T}$, $A^{\mathcal H_T}$ and $B^{\mathcal H_T}$ in \eqref{Einf:Mh}, \eqref{Einf:Ah}, \eqref{Einf:Bh}, respectively. This is the main topic of the paper and is described in great detail in the following.

From the integral representation of $\mathcal H_T$ in \eqref{HT:Hilbert Transformation Integral_H1}, we have
\begin{multline}
    M^{\mathcal H_T}[i, j] =
    \langle \varphi_j , \mathcal H_T \varphi_i
    \rangle_{L^2(0,T)}  = - \frac{2}{\pi} \varphi_i(0) \int_0^T \varphi_j(t) \ln \tan \frac{\pi t}{4T} \mathrm dt \\
    +  \int_0^T \varphi_j(t)
    \int_0^T \mathcal K(s,t) \, \partial_t \varphi_i(s) \, \mathrm ds \, \mathrm dt   \label{NumHT:Mht_entries}
\end{multline}
and
\begin{multline}
    A^{\mathcal H_T}[i, j] =
    \langle \partial_t \varphi_j , \mathcal H_T \varphi_i
    \rangle_{L^2(0,T)}  
    = - \frac{2}{\pi} \varphi_i(0) \int_0^T \partial_t \varphi_j(t) \ln \tan \frac{\pi t}{4T} \mathrm dt \\
    +  \int_0^T \partial_t \varphi_j(t)
    \int_0^T \mathcal K(s,t) \, \partial_t \varphi_i(s) \, \mathrm ds \, \mathrm dt   \label{NumHT:Aht_entries}
\end{multline}
for $i,j=1,\ldots,M$ with the basis functions $\varphi_i, \varphi_j$ in \eqref{Einf:DefStwPolynome} satisfying \eqref{Einf:BasisVarphi}.

For the matrix $B^{\mathcal H_T}$ in \eqref{Einf:Bh}, the integral representation of $\mathcal H_T$ in \eqref{HT:Darstellung_HT} yields
\begin{equation*}
    \mathcal H_T \partial_t \varphi_i(t) = \sum_{k=1}^N \Big[ - (\partial_t \varphi_i)^k_- \mathcal K(t_k,t) +  (\partial_t \varphi_i)^{k-1}_+ \mathcal K(t_{k-1},t)  + \int_{t_{k-1}}^{t_k}  \partial_{tt} (\varphi_{i|\tau_k})(s) \, \mathcal K(s,t) \, \mathrm ds \Big]
\end{equation*}
for $t \in (0,T) \setminus \{ t_k : k=1,\dots,N-1 \}$, $i=1,\dots,M,$ where we use the notation
\begin{equation*}
    v^{k}_-:=\lim_{\varepsilon \searrow 0} v (t_k-\varepsilon) \quad \text{ and } \quad v^{k-1}_+:=\lim_{\varepsilon \searrow 0} v (t_{k-1}+\varepsilon), \quad k=1,\dots,N,
\end{equation*}
for a sufficiently smooth function $v\colon \, (0,T) \to \R.$ Thus, the entries of the matrix in \eqref{Einf:Bh} admit the representation
\begin{align}
     B^{\mathcal H_T}[i, j] =& \langle \partial_t \varphi_j , \mathcal H_T \partial_t \varphi_i
    \rangle_{L^2(0,T)} \nonumber \\ 
    =& \sum_{k=1}^N \Big[ - (\partial_t \varphi_i)^k_- \int_0^T \partial_t \varphi_j(t) \mathcal K(t_k,t) \mathrm dt  +  (\partial_t \varphi_i)^{k-1}_+ \int_0^T \partial_t \varphi_j(t) \mathcal K(t_{k-1},t) \mathrm dt \nonumber \\
    &\qquad + \int_0^T \partial_t \varphi_j(t) \int_{t_{k-1}}^{t_k}  \partial_{tt} (\varphi_{i|\tau_k})(s) \, \mathcal K(s,t) \, \mathrm ds \mathrm dt \Big]    \label{NumHT:Bht_entries}
\end{align}
for $i,j=1,\ldots,M$ with the basis functions $\varphi_i, \varphi_j$ in \eqref{Einf:DefStwPolynome} satisfying \eqref{Einf:BasisVarphi}.

The matrix entries $M^{\mathcal H_T}[i, j]$, $A^{\mathcal H_T}[i, j]$, $B^{\mathcal H_T}[i, j]$  in \eqref{NumHT:Mht_entries}, \eqref{NumHT:Aht_entries}, \eqref{NumHT:Bht_entries}, respectively, are computed element-wise for the partition $\mathcal T^N = \{\tau_\ell\}_{\ell=1}^N$ of $(0,T)$ into intervals $\tau_\ell = (t_{\ell-1},t_\ell)\subset (0,T)$, $\ell=1,\dots, N$. For this purpose, we assume that we use standard Lagrange finite elements as in \cite[Section~6.3]{ErnGuermondFEMI2021}, or for the $hp$-FEM, we use basis functions $\varphi_i$, $i=1,\dots,M$, as described in \cite[Subsection~3.1.6]{Schwabphp1998}. Thus, we assume that the basis functions $\varphi_i \colon \, [0,T] \to \R$, $i=1,\dots,M$, allow for a local representation on an element $\tau_\ell$, $\ell=1,\dots,N,$ by shape functions $\psi_m^{p_\ell} \colon \, [0,1] \to \R$, $m=1,\dots,p_\ell+1$, defined on the reference interval $[0,1].$ In greater detail, for a basis function $\varphi_i$, $i=1,\dots,M$, there exists an element $\tau_\ell = (t_{\ell-1}, t_\ell)$, $\ell=1,\dots,N$, with a related polynomial degree $p_\ell \in \N$ such that the local representation
\begin{equation} \label{NumHT:Formfunktionen}
    \forall t \in \overline{\tau}_\ell : \quad \varphi_{\alpha(m,\ell)}(t) = \psi_m^{p_\ell} \left( \frac{t-t_{\ell-1}}{h_\ell} \right)
\end{equation}
holds true, where $\alpha(m,\ell)=i \in \{1,2,\dots,M\}$ is the global index related to the local index $m$ for the interval $\tau_\ell$ and $\psi_m^{p_\ell}$ is a shape function. Possible choices of such shape functions are the classical Lagrange polynomials with equidistant nodes or Gauß--Lobatto nodes, see \cite[Chapter~6]{ErnGuermondFEMI2021}, or Lobatto polynomials (integrated Legendre polynomials), see \cite[Subsection~3.1.4]{Schwabphp1998}.     

With this notation, we fix two intervals $\tau_\ell = (t_{\ell-1},t_\ell)$, $\tau_k = (t_{k-1},t_k)$ with indices $\ell, k \in \{1,\dots,N\}$ and related local polynomial degrees $p_\ell$, $p_k \in \N$. We define the local matrix $M^{\mathcal H_T}_{k,\ell} \in \R^{(p_k+1) \times (p_\ell+1)}$ by
\begin{align} 
 M^{\mathcal H_T}_{k,\ell}[n, m] =& - \frac{2}{\pi} \varphi_{\alpha(n,k)}(0) \int_{t_{\ell-1}}^{t_\ell} \varphi_{\alpha(m,\ell)}(t) \ln \tan \frac{\pi t}{4T} \mathrm dt \nonumber \\
 &+ \int_{t_{\ell-1}}^{t_\ell} \varphi_{\alpha(m,\ell)}(t) 
  \int_{t_{k-1}}^{t_k} \mathcal K(s,t) \partial_t \varphi_{\alpha(n,k)}(s) \, \mathrm ds \, \mathrm dt   \nonumber \\
  =& - \delta_{k,1} \psi^{p_k}_n(0) \frac{2 h_\ell}{\pi}  \int_0^1 \psi^{p_\ell}_m(\eta) \ln \tan \frac{\pi (t_{\ell-1}+\eta h_\ell)}{4T} \mathrm d\eta  \nonumber \\
  &+ h_\ell \int_0^1 \psi^{p_\ell}_m(\eta) \int_0^1 \mathcal K(t_{k-1}+\xi h_k,t_{\ell-1}+\eta h_\ell) \partial_t \psi^{p_k}_n(\xi) \, \mathrm d\xi \, \mathrm d\eta   \label{NumHT:Mht_local}
\end{align}
and, analogously, the local matrix $A^{\mathcal H_T}_{k,\ell} \in \R^{(p_k+1) \times (p_\ell+1)}$ by
\begin{multline} 
 A^{\mathcal H_T}_{k,\ell}[n, m] = - \delta_{k,1} \psi^{p_k}_n(0) \frac{2}{\pi}  \int_0^1 \partial_t \psi^{p_\ell}_m(\eta) \ln \tan \frac{\pi (t_{\ell-1}+\eta h_\ell)}{4T} \mathrm d\eta  \\
  +  \int_0^1 \partial_t \psi^{p_\ell}_m(\eta) \int_0^1 \mathcal K(t_{k-1}+\xi h_k,t_{\ell-1}+\eta h_\ell) \partial_t \psi^{p_k}_n(\xi) \, \mathrm d\xi \, \mathrm d\eta   \label{NumHT:Aht_local}
\end{multline}
for $n=1,\dots,p_k+1$ and $m=1,\dots,p_\ell+1$, where we used the property~\eqref{Einf:BasisVarphi}, the local representation~\eqref{NumHT:Formfunktionen} and $\delta_{k,1}$ is the usual Kronecker delta.

In the same way, the local matrix $B^{\mathcal H_T}_{k,\ell} \in \R^{(p_k+1) \times (p_\ell+1)}$ is defined by
\begin{multline} \label{NumHT:Bht_local}
 B^{\mathcal H_T}_{k,\ell}[n, m] = \frac{-1}{\pi h_k} \partial_t \psi^{p_k}_n(0) J_{k,\ell}^0[m] + \frac{1}{\pi h_k} \partial_t \psi^{p_k}_n(1) J_{k,\ell}^1[m] \\
   + \frac{1}{h_k} \int_0^1 \partial_t \psi^{p_\ell}_m(\eta) \int_0^1 \mathcal K(t_{k-1}+\xi h_k,t_{\ell-1}+\eta h_\ell) \partial_{tt} \psi^{p_k}_n(\xi) \, \mathrm d\xi \, \mathrm d\eta 
\end{multline}
for $n=1,\dots,p_k+1$ and $m=1,\dots,p_\ell+1$ with the integrals
\begin{align}
    J_{k,\ell}^0[m] &= -\pi \int_0^1 \partial_t \psi^{p_\ell}_m(\eta) \mathcal K(t_{k-1}, t_{\ell-1}+\eta h_\ell) \mathrm d\eta, \label{NumHT:Bht_local_J0}\\
    J_{k,\ell}^1[m] &=  -\pi \int_0^1 \partial_t \psi^{p_\ell}_m(\eta) \mathcal K(t_k, t_{\ell-1}+\eta h_\ell) \mathrm d\eta.  \label{NumHT:Bht_local_J1}
\end{align}
Note that $J_{k,\ell}^0[m]=J_{k-1,\ell}^1[m]$, i.e., investigating only one term is another possibility. To keep the same cases as for the matrices $M^{\mathcal H_T}$, $A^{\mathcal H_T}$ and therefore, due to implementation issues, we give the quadrature schemes for $J_{k,\ell}^0[m]$ and $J_{k,\ell}^1[m]$.

These local matrices  $M^{\mathcal H_T}_{k,\ell}$, $A^{\mathcal H_T}_{k,\ell}$, $B^{\mathcal H_T}_{k,\ell}$ are used for the calculation of the matrices $M^{\mathcal H_T}$, $A^{\mathcal H_T}$ and $B^{\mathcal H_T}$, respectively. Thus, the assembling of the matrices $M^{\mathcal H_T}$, $A^{\mathcal H_T}$ and $B^{\mathcal H_T}$ is realized element-wise, i.e., by two loops via the elements:
\begin{enumerate}
  \item Set $M^{\mathcal H_T} = 0_M$, $A^{\mathcal H_T} = 0_M$ and $B^{\mathcal H_T}=0_M$, where $0_M \in \R^{M \times M}$ is the zero matrix.
  \item For $k,\ell=1,\dots,N$,
    \begin{itemize}
      \item compute the matrices $M^{\mathcal H_T}_{k,\ell}, A^{\mathcal H_T}_{k,\ell}, B^{\mathcal H_T}_{k,\ell} \in \R^{(p_k+1) \times (p_\ell+1)}$,
      \item and set
            \begin{align*}
                M^{\mathcal H_T}[\alpha(n,k),\alpha(m,\ell)] &= M^{\mathcal H_T}[\alpha(n,k),\alpha(m,\ell)] + M^{\mathcal H_T}_{k,\ell}[n,m], \\
                A^{\mathcal H_T}[\alpha(n,k),\alpha(m,\ell)] &= A^{\mathcal H_T}[\alpha(n,k),\alpha(m,\ell)] + A^{\mathcal H_T}_{k,\ell}[n,m], \\
                B^{\mathcal H_T}[\alpha(n,k),\alpha(m,\ell)] &= B^{\mathcal H_T}[\alpha(n,k),\alpha(m,\ell)] + B^{\mathcal H_T}_{k,\ell}[n,m]
            \end{align*}
            for $n=1,\dots, p_k+1$, $m=1,\dots,p_\ell+1$.
    \end{itemize}
\end{enumerate}
It remains to calculate the local matrices $M^{\mathcal H_T}_{k,\ell}, A^{\mathcal H_T}_{k,\ell}, B^{\mathcal H_T}_{k,\ell} \in \R^{(p_k+1) \times (p_\ell+1)}$, which is the main purpose of this work and the content of the following subsections Subsection~\ref{sec:NumHTMh}, \ref{sec:NumHTAh}, \ref{sec:NumHTBh}, respectively.

\subsection{Local Matrix $M^{\mathcal H_T}_{k,\ell}$} \label{sec:NumHTMh}

In this subsection, the computation of the local matrix $M^{\mathcal H_T}_{k,\ell}$ in \eqref{NumHT:Mht_local} is investigated for fixed elements $\tau_k$, $\tau_\ell$ and local polynomial degrees $p_k$, $p_\ell$ with $k, \ell \in \{1,\dots,N\}$. We apply the strategy of \cite[Subsection~3.1]{SteinbachZankJNUM2021}, i.e., the integrals in \eqref{NumHT:Mht_local} are split into regular and singular parts. As in \cite[Subsection~3.1]{SteinbachZankJNUM2021}, we distinguish three different cases for the element indices $k$, $\ell$, which correspond to the singularities of the integrands in \eqref{NumHT:Mht_local}. For this purpose, let $n \in \{1,\dots,p_k+1\}$ and $m \in \{1,\dots,p_\ell+1\}$ be fixed. We investigate the case $k=\ell=1$ in great detail, whereas we state only the final results for the remaining cases, since they can be treated in a simpler or similar way as the case $k=\ell=1.$ These final results are formulated as integrals such that these integrals can be directly calculated or approximated by the quadratures~\eqref{Int:GaussLegendre}, \eqref{Int:GaussTensor}, \eqref{Int:Gausslog}, \eqref{Int:GaussLogTensor}.

\subsubsection{Case $k=\ell=1$} \label{sec:NumHTMh:Fall11}

We have
\begin{align*} 
 M^{\mathcal H_T}_{1,1}[n, m] =& - \psi^{p_1}_n(0) \frac{2 h_1}{\pi}  \int_0^1 \psi^{p_1}_m(\eta) F_{1,1}^1( \eta h_1 ) \mathrm d\eta - \psi^{p_1}_n(0) \frac{2 h_1}{\pi} \underbrace{ \int_0^1 \psi^{p_1}_m(\eta) \ln \eta \mathrm d\eta }_{=: I_{1,1}^1 } \\
  &- \psi^{p_1}_n(0) \frac{2 h_1}{\pi} \ln h_1 \int_0^1 \psi^{p_1}_m(\eta) \mathrm d\eta   - \frac{h_1}{\pi} \int_0^1 \psi^{p_1}_m(\eta) \int_0^1 F_{1,1}^2(\xi h_1,\eta h_1) \partial_t \psi^{p_1}_n(\xi) \, \mathrm d\xi \, \mathrm d\eta   \\
  &- \frac{h_1}{\pi} \underbrace{ \int_0^1 \psi^{p_1}_m(\eta) \int_0^1 \ln \abs{s-t}_{|s=\xi h_1, t=\eta h_1} \partial_t \psi^{p_1}_n(\xi) \, \mathrm d\xi \, \mathrm d\eta }_{ =: I_{1,1}^2 } \\
  &-\frac{h_1}{\pi} \underbrace{ \int_0^1 \psi^{p_1}_m(\eta) \int_0^1 \ln (s+t)_{|s=\xi h_1, t=\eta h_1} \partial_t \psi^{p_1}_n(\xi) \, \mathrm d\xi \, \mathrm d\eta }_{ =: I_{1,1}^3 }
\end{align*}
with
\begin{equation*}
  F_{1,1}^1(t):= \ln \frac { \tan \frac{\pi t}{4T} }{ t } ,\quad  F_{1,1}^2(s,t) := \ln \left[ \frac{ \tan \frac{\pi (s+t)}{4T} }{s+t}  \frac{ \tan \frac{\pi \abs{t-s}}{4T} }{ \abs{s-t}} \right].
\end{equation*}
Proceeding as in \cite[(3.10)]{SteinbachZankJNUM2021}, i.e., the square domain of integration is split into two triangles, and on 
each of these triangles a Duffy transformation is applied, yields
\begin{align*} 
 M^{\mathcal H_T}_{1,1}[n, m] =& - \psi^{p_1}_n(0) \frac{2 h_1}{\pi}  I_{1,1}^1  - \frac{h_1}{\pi}  \left( I_{1,1}^2 + I_{1,1}^3 \right) \\
  &- \psi^{p_1}_n(0) \frac{2 h_1}{\pi}  \int_0^1 \psi^{p_1}_m(\eta) F_{1,1}^1( \eta h_1 ) \mathrm d\eta  - \psi^{p_1}_n(0) \frac{2 h_1}{\pi} \ln h_1 \int_0^1 \psi^{p_1}_m(\eta) \mathrm d\eta  \\
  &- \frac{h_1}{\pi} \int_0^1 \int_0^1 \underbrace{ \psi^{p_1}_m(\eta) F_{1,1}^2((1-\xi)\eta h_1,\eta h_1) \eta \partial_t \psi^{p_1}_n((1-\xi)\eta) }_{ =: G_{1,1}^1(\xi,\eta) } \, \mathrm d\xi \, \mathrm d\eta \\
  &- \frac{h_1}{\pi} \int_0^1 \int_0^1 \underbrace{ \psi^{p_1}_m((1-\eta)\xi) F_{1,1}^2(\xi h_1,(1-\eta)\xi h_1) \xi \partial_t \psi^{p_1}_n(\xi) }_{ =: G_{1,1}^2(\xi,\eta) }  \, \mathrm d\eta \, \mathrm d\xi.
\end{align*}
Further, applying the Gauß--Legendre quadratures \eqref{Int:GaussLegendre}, \eqref{Int:GaussTensor} of order $K \in \N$ gives the approximation
\begin{align*} 
 M^{\mathcal H_T}_{1,1}[n, m] \approx& - \psi^{p_1}_n(0) \frac{2 h_1}{\pi}  I_{1,1}^1  - \frac{h_1}{\pi} \left( I_{1,1}^2 + I_{1,1}^3 \right)\\
  &- \psi^{p_1}_n(0) \frac{2 h_1}{\pi} \sum_{\nu=1}^K \omega_{\nu,K} \psi^{p_1}_m(\xi_{\nu,K}) \left( F_{1,1}^1( \xi_{\nu,K} h_1 ) + \ln h_1 \right)\\
  &- \frac{h_1}{\pi} \sum_{\nu_1=1}^K \sum_{\nu_2=1}^K \omega_{\nu_1,K} \omega_{\nu_2,K} \left( G_{1,1}^1(\xi_{\nu_1,K},\xi_{\nu_2,K}) + G_{1,1}^2(\xi_{\nu_1,K},\xi_{\nu_2,K}) \right).
\end{align*}
Using \cite[Corollary~3.1]{SteinbachZankJNUM2021} for the two-dimensional parts and similar arguments for the one-di\-men\-sion\-al part results in exponential convergence of the applied Gauß--Legendre quadratures with respect to the number $K$ of Gauß integration nodes. It remains to calculate the singular integrals $I_{1,1}^1$, $I_{1,1}^2$ and  $I_{1,1}^3$. In \cite[Subsection~3.1]{SteinbachZankJNUM2021}, an analytic integration is proposed, which is practical only for constant low polynomial degrees, e.g., $p_k=p_\ell \in \{1,2\}$. For arbitrary high polynomial degrees $p_k, p_\ell$, quadrature rules of order adapted to $p_k, p_\ell$ are easier to implement. This approach is not contained in \cite{SteinbachZankJNUM2021} and thus, is investigated in the following.

For the integral $I_{1,1}^1$, the Gauß--Jacobi quadrature \eqref{Int:Gausslog} for a logarithmic term yields
\begin{equation*}
  I_{1,1}^1 = \int_0^1 \psi^{p_1}_m(\eta) \ln \eta \mathrm d\eta = -\sum_{\nu=1}^{\Klog} \hat \omega_{\nu,\Klog} \psi^{p_1}_m(\hat \xi_{\nu,\Klog}) 
\end{equation*}
with $\frac{p_1+1}{2} \leq \Klog \in \N$.

For the integral $I_{1,1}^2$, we apply the Gauß--Legendre quadrature~\eqref{Int:GaussLegendre} and the quadrature~\eqref{Int:GaussLogTensor}, which lead to
\begin{align*}
    I_{1,1}^2 =& \ln h_1 \int_0^1 \psi^{p_1}_m(\eta) \int_0^1 \partial_t \psi^{p_1}_n(\xi) \, \mathrm d\xi \, \mathrm d\eta + \int_0^1 \int_0^1 \underbrace{ \psi^{p_1}_m(\eta) \partial_t \psi^{p_1}_n(\xi) }_{=H_{1,1}(\xi,\eta)} \ln \abs{\xi-\eta}\, \mathrm d\xi \, \mathrm d\eta \\
    =& \ln h_1 \left( \psi^{p_1}_n(1) - \psi^{p_1}_n(0) \right) \sum_{\nu=1}^{\Kreg} \omega_{\nu,\Kreg} \psi^{p_1}_m(\xi_{\nu,\Kreg}) \\
    &- \sum_{\mu=1}^{\Klog}\sum_{\nu=1}^{\Klog} \hat \omega_{\mu}  \hat \xi_{\mu} \omega_{\nu} \left(  H_{1,1}((1-\xi_{\nu})\hat \xi_{\mu}, \hat \xi_{\mu}) +  H_{1,1}(1-(1-\xi_{\nu})\hat \xi_{\mu},1-\hat \xi_{\mu})  \right) \\
    &- \sum_{\mu=1}^{\Klog}\sum_{\nu=1}^{\Klog} \omega_{\mu} \xi_{\mu} \hat \omega_{\nu} \left(  H_{1,1}((1-\hat \xi_{\nu})\xi_{\mu}, \xi_{\mu}) +  H_{1,1}(1-(1-\hat \xi_{\nu}) \xi_{\mu},1- \xi_{\mu})  \right)
\end{align*}
with $\frac{p_1+1}{2} \leq \Kreg \in \N$, $p_1 + \frac 1 2 \leq \Klog \in \N$ and the function
\begin{equation*}
H_{1,1}(\xi,\eta) := \psi^{p_1}_m(\eta) \partial_t \psi^{p_1}_n(\xi).
\end{equation*}

For the integral $I_{1,1}^3$, the square domain of integration of the first line is split into two triangles, and on 
each of these triangles a Duffy transformation is applied, which yields
\begin{align*}
    I_{1,1}^3 &= \ln h_1 \int_0^1 \psi^{p_1}_m(\eta) \int_0^1 \partial_t \psi^{p_1}_n(\xi) \, \mathrm d\xi \, \mathrm d\eta  +  \int_0^1 \int_0^1 \psi^{p_1}_m(\eta)  \ln (\xi+\eta) \partial_t \psi^{p_1}_n(\xi) \, \mathrm d\xi \, \mathrm d\eta \\
    &= \ln h_1 \int_0^1 \psi^{p_1}_m(\eta) \int_0^1 \partial_t \psi^{p_1}_n(\xi)  \mathrm d\xi \mathrm d\eta +  \int_0^1 \int_0^1 \psi^{p_1}_m(\eta) \eta   \underbrace{ \ln (\eta\xi+\eta)}_{=\ln \eta + \ln(1+\xi) } \partial_t \psi^{p_1}_n(\xi \eta) \mathrm d\xi \mathrm d\eta \\
    &\qquad+ \int_0^1 \int_0^1 \psi^{p_1}_m(\xi\eta) \xi \underbrace{ \ln (\xi+\xi\eta) }_{=\ln\xi + \ln(1+\eta)} \partial_t \psi^{p_1}_n(\xi) \, \mathrm d\xi \, \mathrm d\eta.
\end{align*}
Further, using the quadratures~\eqref{Int:GaussLegendre}, \eqref{Int:Gausslog}, \eqref{Int:GaussTensor} gives
\begin{multline*}
    I_{1,1}^3 \approx \ln h_1 \left( \psi^{p_1}_n(1) - \psi^{p_1}_n(0) \right) \sum_{\nu=1}^{K_1} \omega_{\nu,K_1} \psi^{p_1}_m(\xi_{\nu,K_1}) - \sum_{\mu=1}^{K_2} \sum_{\nu=1}^{K_3}  \omega_{\mu,K_2}  \hat \omega_{\nu,K_3} H_1^3(\xi_{\mu,K_2},\hat \xi_{\nu,K_3})  \\
    - \sum_{\mu=1}^{K_4} \sum_{\nu=1}^{K_5}  \hat \omega_{\mu,K_4} \omega_{\nu,K_5} H_2^3(\hat \xi_{\mu,K_4}, \xi_{\nu,K_5}) + \sum_{\nu_1=1}^K \sum_{\nu_2=1}^K \omega_{\nu_1,K} \omega_{\nu_2,K} H_3^3(\xi_{\nu_1,K},\xi_{\nu_2,K})
\end{multline*}
with the functions
\begin{equation*}
    H_1^3(\xi,\eta) := \psi^{p_1}_m(\eta) \, \eta \, \partial_t \psi^{p_1}_n(\xi \eta), \quad H_2^3(\xi,\eta) :=  \psi^{p_1}_m(\xi\eta) \, \xi \, \partial_t \psi^{p_1}_n(\xi),
\end{equation*}
\begin{equation*}
    H_3^3(\xi,\eta) := \psi^{p_1}_m(\eta) \eta \ln(1+\xi) \partial_t \psi^{p_1}_n(\xi \eta) + \psi^{p_1}_m(\xi\eta) \xi \ln(1+\eta) \partial_t \psi^{p_1}_n(\xi)
\end{equation*}
and the number of integration nodes $\frac{p_1+1}{2} \leq K_1 \in \N$, $\frac{p_1}{2} \leq K_2 \in \N$,  $p_1+\frac{1}{2} \leq K_3 \in \N$, $p_1 + \frac{1}{2} \leq K_4 \in \N$, $\frac{p_1+1}{2} \leq K_5 \in \N$ and $K \in \N$ sufficiently large. Note that all integrals of $I_{1,1}^3$ are calculated exactly except for the integral with the integrand $H_3^3$, where arguments similar to \cite[Corollary~3.1]{SteinbachZankJNUM2021} yield exponential convergence of the applied Gauß--Legendre quadratures with respect to the number $K$ of Gauß integration nodes.

\subsubsection{Case $k=\ell=N$} 

For this and the remaining cases, we state only the integrals, where the quadratures of Section~\ref{sec:Int} have to be applied. We calculate
\begin{align*} 
 M^{\mathcal H_T}_{N,N}[n, m] =& - \frac{h_N}{\pi} \int_0^1 \int_0^1 \psi^{p_N}_m(\eta) G_{N,N}^1(\xi,\eta) \partial_t \psi^{p_N}_n((1-\xi)\eta)  \, \mathrm d\xi \, \mathrm d\eta \\
  &\;- \frac{h_N}{\pi} \int_0^1 \int_0^1  \psi^{p_N}_m((1-\eta)\xi) G_{N,N}^2(\xi,\eta) \partial_t \psi^{p_N}_n(\xi)  \, \mathrm d\eta \, \mathrm d\xi  \\
  &\;- \frac{h_N }{\pi} \int_0^1  \int_0^1 \psi^{p_N}_m(\eta) \partial_t \psi^{p_N}_n(\xi) \ln\abs{\xi-\eta} \, \mathrm d\xi \, \mathrm d\eta \\
  &\;+\frac{h_N}{\pi}  \int_0^1  \int_0^1 \psi^{p_N}_m(1-\eta) \partial_t \psi^{p_N}_n(1-\eta\xi) \left[ \ln \eta + \ln(1+\xi) \right] \eta \, \mathrm d\xi \, \mathrm d\eta \\
  &\;+\frac{h_N}{\pi}  \int_0^1  \int_0^1 \psi^{p_N}_m(1-\xi\eta) \partial_t \psi^{p_N}_n(1-\xi) \left[ \ln \xi + \ln(1+\eta) \right] \xi \, \mathrm d\xi \, \mathrm d\eta
\end{align*}
with the functions
\begin{align*}
  F_{N,N}(s,t) &:= \ln \left[ \tan \frac{\pi (s+t)}{4T} (2T-s-t) \frac{ \tan \frac{\pi \abs{t-s}}{4T} }{ \abs{s-t}} \right], \\
  G_{N,N}^1(\xi,\eta) &:= F_{N,N}(t_{N-1}+(1-\xi)\eta h_N,t_{N-1}+\eta h_N) \eta,\\
  G_{N,N}^2(\xi,\eta) &:= F_{N,N}(t_{N-1}+\xi h_N,t_{N-1}+(1-\eta)\xi h_N) \xi,
\end{align*}
which do not have any singularity in the domain of integration. For the first and the second integral, we apply the tensor Gauß quadrature~\eqref{Int:GaussTensor}, for the third integral, the quadrature~\eqref{Int:GaussLogTensor} and for the last two integrals, the quadratures~\eqref{Int:GaussLegendre}, \eqref{Int:Gausslog}, as shown in Subsection~\ref{sec:NumHTMh:Fall11}.

\subsubsection{Cases, excluding $k=\ell=1$ and $k=\ell=N$}

We compute
\begin{multline*} 
 M^{\mathcal H_T}_{k,\ell}[n, m] = - \frac{h_\ell}{\pi} \int_0^1 \int_0^1 \psi^{p_\ell}_m(\eta) G_{k,\ell}^1(\xi,\eta) \partial_t \psi^{p_k}_n((1-\xi)\eta)  \, \mathrm d\xi \, \mathrm d\eta - \frac{h_\ell}{\pi} I^2_{k,\ell} \\
  - \frac{h_\ell}{\pi} \int_0^1 \int_0^1  \psi^{p_\ell}_m((1-\eta)\xi) G_{k,\ell}^2(\xi,\eta) \partial_t \psi^{p_k}_n(\xi)  \, \mathrm d\eta \, \mathrm d\xi  -  \delta_{k,1}\psi^{p_k}_n(0) \frac{2 h_\ell}{\pi} I^1_{\ell}
\end{multline*}
with the integrals
\begin{align*}
  I^1_{\ell} &:= \int_0^1 \psi^{p_\ell}_m(\eta) \ln \tan \frac{\pi (t_{\ell-1} + \eta h_\ell)}{4T} \mathrm d\eta, \\
  I^2_{k,\ell} &:=\int_0^1  \int_0^1 \psi^{p_\ell}_m(\eta) \partial_t \psi^{p_k}_n(\xi) \ln\abs{(t_{k-1}+\xi h_k)-(t_{\ell-1}+\eta h_\ell)} \, \mathrm d\xi \, \mathrm d\eta
\end{align*}
and with the functions
\begin{align*}
  F_{k,\ell}(s,t) &:= \ln \left[ \tan \frac{\pi (s+t)}{4T} \frac{ \tan \frac{\pi \abs{t-s}}{4T} }{ \abs{s-t}} \right], \\
  G_{k,\ell}^1(\xi,\eta) &:= F_{k,\ell}(t_{k-1}+(1-\xi)\eta h_k,t_{\ell-1}+\eta h_\ell) \eta ,\\
  G_{k,\ell}^2(\xi,\eta) &:= F_{k,\ell}(t_{k-1}+\xi h_k,t_{\ell-1}+(1-\eta)\xi h_\ell) \xi,
\end{align*}
which do not have any singularity in the domain of integration, i.e., the tensor Gauß quadrature~\eqref{Int:GaussTensor} is applied to these integrands. The singular parts $I^1_{\ell}, I^2_{k,\ell}$ are treated differently corresponding to the indices $k,\ell.$

First, we consider the part $I^1_{\ell}$. The case $\ell=1$ is excluded, since $I^1_{\ell}$ contributes only for $k=1$ due to $\delta_{k,1} = 0$ for $k>1$. For $\ell>1$, the term $I^1_{\ell}$ is not singular. Thus, we apply the Gauß--Legendre quadrature~\eqref{Int:GaussLegendre} to
\begin{equation*}
  I^1_{\ell} = \int_0^1 \psi^{p_\ell}_m(\eta) \ln \tan \frac{\pi (t_{\ell-1} + \eta h_\ell)}{4T} \mathrm d\eta
\end{equation*}
for $\ell > 1.$

Second, we consider the part $I^2_{k,\ell}$. We distinguish four cases:
\begin{enumerate}
  \item case $k=\ell$: We compute
        \begin{equation*}
            I^2_{k,\ell} = \ln h_\ell \left( \psi^{p_\ell}_n(1) - \psi^{p_\ell}_n(0) \right) \int_0^1 \psi^{p_\ell}_m(\eta) \mathrm d\eta + \int_0^1 \int_0^1 \psi^{p_\ell}_m(\eta) \partial_t \psi^{p_\ell}_n(\xi) \ln\abs{\xi-\eta} \mathrm d\xi \mathrm d\eta,
        \end{equation*}
        where the quadratures~\eqref{Int:GaussLegendre}, \eqref{Int:GaussLogTensor} are applied.
  \item case $k=\ell+1$: The equality
        \begin{multline*}
            I^2_{k,\ell} = \int_0^1 \int_0^1 \psi^{p_\ell}_m(1-\eta) \partial_t \psi^{p_k}_n(\eta \xi) \eta \left( \ln \eta + \ln (\xi h_k + h_\ell) \right)  \mathrm d\xi \mathrm d\eta \\
            + \int_0^1 \int_0^1 \psi^{p_\ell}_m(1-\xi +\eta\xi) \partial_t \psi^{p_k}_n(\xi) \xi \left( \ln \xi + \ln ((1-\eta) h_\ell + h_k) \right) \mathrm d\xi \mathrm d\eta
        \end{multline*}  
        holds true, where we use the quadratures~\eqref{Int:GaussLegendre}, \eqref{Int:Gausslog}, and \eqref{Int:GaussTensor}.
  \item case $k+1=\ell$: We have that
        \begin{multline*}
            I^2_{k,\ell} = \int_0^1 \int_0^1 \psi^{p_\ell}_m((1-\xi)\eta) \partial_t \psi^{p_k}_n(1-\eta) \eta \left( \ln \eta + \ln (h_k + (1-\xi) h_\ell) \right)  \mathrm d\xi \mathrm d\eta \\
            + \int_0^1 \int_0^1 \psi^{p_\ell}_m(\eta) \partial_t \psi^{p_k}_n(1-\eta+\xi\eta) \eta \left( \ln \eta + \ln ((1-\xi) h_k + h_\ell) \right) \mathrm d\xi \mathrm d\eta
        \end{multline*}  
        holds true, where we use again the quadratures~\eqref{Int:GaussLegendre}, \eqref{Int:Gausslog}, and \eqref{Int:GaussTensor}.
  \item otherwise: We apply the tensor Gauß quadrature~\eqref{Int:GaussTensor} to
        \begin{equation*}
            I^2_{k,\ell} = \int_0^1 \int_0^1 \psi^{p_\ell}_m(\eta) \partial_t \psi^{p_k}_n(\xi)  \ln \abs{ (t_{k-1} + \xi h_k) - (t_{\ell-1} + \eta h_\ell) }  \mathrm d\xi \mathrm d\eta,
        \end{equation*}
        as the integral is not singular.
\end{enumerate}

\subsection{Local Matrix $A^{\mathcal H_T}_{k,\ell}$}  \label{sec:NumHTAh}

In this subsection, the computation of the local matrix $A^{\mathcal H_T}_{k,\ell}$ in \eqref{NumHT:Aht_local} is investigated for fixed elements $\tau_k$, $\tau_\ell$ and related polynomial degrees $p_k$, $p_\ell$ with $k, \ell \in \{1,\dots,N\}$. To calculate the local matrix $A^{\mathcal H_T}_{k,\ell}$, replace in Subsection~\ref{sec:NumHTMh} the function $\psi^{p_\ell}_m(\cdot)$ with the function $\frac{1}{h_\ell}\partial_t \psi^{p_\ell}_m(\cdot)$ in all occurring quantities.

\subsection{Local Matrix $B^{\mathcal H_T}_{k,\ell}$}  \label{sec:NumHTBh}

In this subsection, the computation of the local matrix $B^{\mathcal H_T}_{k,\ell}$ in \eqref{NumHT:Bht_local} is investigated for fixed elements $\tau_k$, $\tau_\ell$ and related polynomial degrees $p_k$, $p_\ell$ with $k, \ell \in \{1,\dots,N\}$. To calculate the second line in \eqref{NumHT:Bht_local}, replace in Subsection~\ref{sec:NumHTMh} the function $\psi^{p_\ell}_m(\cdot)$ with the function $\frac{1}{h_\ell}\partial_t \psi^{p_\ell}_m(\cdot)$ and the function $\partial_t \psi^{p_k}_n(\cdot)$ with the function $\frac{1}{h_k}\partial_{tt} \psi^{p_k}_n(\cdot)$ in all occurring quantities, which are related to the last line in \eqref{NumHT:Mht_local}. Thus, it remains to compute the integrals $J_{k,\ell}^0[m]$ and $J_{k,\ell}^1[m]$ given in \eqref{NumHT:Bht_local_J0} and \eqref{NumHT:Bht_local_J1}, respectively, where $m=1,\dots,p_\ell+1$.

As in Subsection~\ref{sec:NumHTMh}, we distinguish three different cases for the element indices $k$, $\ell$, which correspond to the singularities of the integrands in  \eqref{NumHT:Bht_local_J0} and \eqref{NumHT:Bht_local_J1}. For this purpose, let $m \in \{1,\dots,p_\ell+1\}$ be fixed. We state only the final results, since their derivation is given in a simpler or similar way as in Subsection~\ref{sec:NumHTMh}. These final results are formulated as integrals such that these integrals can be directly calculated or approximated by the Gauß--Legendre quadrature~\eqref{Int:GaussLegendre} and the Gauß--Jacobi quadrature~\eqref{Int:Gausslog}.

\subsubsection{Case $k=\ell=1$} 

We calculate
\begin{equation*} 
  J_{k,\ell}^0[m] = \int_0^1 \partial_t \psi^{p_1}_m(\eta) \mathcal F_{1,1}(0, \eta h_1) \mathrm d\eta+ 2 \int_0^1 \partial_t \psi^{p_1}_m(\eta) \ln \eta \mathrm d\eta  + 2 \ln h_1 \big( \psi^{p_1}_m(1) - \psi^{p_1}_m(0) \big) 
\end{equation*}
and
\begin{multline*} 
  J_{k,\ell}^1[m] = \int_0^1 \partial_t \psi^{p_1}_m(\eta) \mathcal F_{1,1}(t_1, \eta h_1) \mathrm d\eta + \int_0^1 \partial_t \psi^{p_1}_m(\eta) \ln(1+\eta) \mathrm d\eta \\
  + \int_0^1 \partial_t \psi^{p_1}_m(1-\eta) \ln\eta \mathrm d\eta + 2 \ln h_1 \big( \psi^{p_1}_m(1) - \psi^{p_1}_m(0) \big) 
\end{multline*}
with the function
\begin{equation*}
  \mathcal F_{1,1}(s,t) := \ln \left[ \frac{ \tan \frac{\pi (s+t)}{4T}}{s+t} \frac{ \tan \frac{\pi \abs{t-s}}{4T} }{ \abs{s-t}} \right],
\end{equation*}
which does not have any singularity in the domain of integration. For these integrals, we apply the quadratures~\eqref{Int:GaussLegendre}, \eqref{Int:Gausslog}, as shown in Subsection~\ref{sec:NumHTMh:Fall11}.

\subsubsection{Case $k=\ell=N$}

We compute
\begin{multline*}
  J_{k,\ell}^0[m] = \int_0^1 \partial_t \psi^{p_N}_m(\eta) \mathcal F_{N,N}(t_{N-1}, t_{N-1}+\eta h_N) \mathrm d\eta \\
  + \int_0^1 \partial_t \psi^{p_N}_m(\eta) \ln \eta \mathrm d\eta - \int_0^1 \partial_t \psi^{p_N}_m(\eta) \ln (2-\eta) \mathrm d\eta 
\end{multline*}
and
\begin{equation*} 
  J_{k,\ell}^1[m] = 0
\end{equation*}
with the function
\begin{equation*}
  \mathcal F_{N,N}(s,t) := \ln \left[ \tan \frac{\pi (s+t)}{4T} (2T-s-t) \frac{ \tan \frac{\pi \abs{t-s}}{4T} }{ \abs{s-t}} \right],
\end{equation*}
which does not have any singularity in the domain of integration. For these integrals, we apply again the quadratures~\eqref{Int:GaussLegendre}, \eqref{Int:Gausslog}, as shown in Subsection~\ref{sec:NumHTMh:Fall11}.

\subsubsection{Cases, excluding $k=\ell=1$ and $k=\ell=N$}

We have
\begin{align*} 
 J_{k,\ell}^0[m] &= \int_0^1 \partial_t \psi^{p_\ell}_m(\eta) \mathcal F_{k,\ell}(t_{k-1}, t_{\ell-1}+\eta h_\ell) \mathrm d\eta + J_{k,\ell}^{0,\mathrm{sing}}[m], \\
 J_{k,\ell}^1[m] &= \int_0^1 \partial_t \psi^{p_\ell}_m(\eta) \mathcal F_{k,\ell}(t_k, t_{\ell-1}+\eta h_\ell) \mathrm d\eta + J_{k,\ell}^{1,\mathrm{sing}}[m]
\end{align*}
with the function
\begin{equation*}
  \mathcal F_{k,\ell}(s,t) := \ln \left[ \tan \frac{\pi (s+t)}{4T} \frac{ \tan \frac{\pi \abs{t-s}}{4T} }{ \abs{s-t}} \right],
\end{equation*}
which does not have any singularity in the domain of integration, and the terms
\begin{equation*}
   J_{k,\ell}^{0,\mathrm{sing}}[m] =
   \begin{cases}
     \ln h_\ell \big( \psi^{p_\ell}_m(1) - \psi^{p_\ell}_m(0) \big) + \int_0^1 \partial_t \psi^{p_\ell}_m(\eta) \ln \eta \mathrm d\eta, & k=\ell, \\
     \ln h_\ell \big( \psi^{p_\ell}_m(1) - \psi^{p_\ell}_m(0) \big) + \int_0^1 \partial_t \psi^{p_\ell}_m(1-\eta) \ln \eta \mathrm d\eta, & k=\ell+1, \\
     \int_0^1 \partial_t \psi^{p_\ell}_m(\eta) \ln(h_k+\eta h_\ell) \mathrm d\eta, & k+1=\ell, \\
     \int_0^1 \partial_t \psi^{p_\ell}_m(\eta) \ln \abs{t_{k-1} - (t_{\ell-1}+\eta h_\ell)} \mathrm d\eta, & \text{otherwise,}     
   \end{cases}
\end{equation*}
and
\begin{equation*}
   J_{k,\ell}^{1,\mathrm{sing}}[m] =
   \begin{cases}
     \ln h_\ell \big( \psi^{p_\ell}_m(1) - \psi^{p_\ell}_m(0) \big) + \int_0^1 \partial_t \psi^{p_\ell}_m(1-\eta) \ln \eta \mathrm d\eta, & k=\ell, \\
     \int_0^1 \partial_t \psi^{p_\ell}_m(\eta) \ln(h_k+(1-\eta) h_\ell) \mathrm d\eta, & k=\ell+1, \\
     \ln h_\ell \big( \psi^{p_\ell}_m(1) - \psi^{p_\ell}_m(0) \big) + \int_0^1 \partial_t \psi^{p_\ell}_m(\eta) \ln \eta \mathrm d\eta, & k+1=\ell, \\
     \int_0^1 \partial_t \psi^{p_\ell}_m(\eta) \ln \abs{t_k - (t_{\ell-1}+\eta h_\ell)} \mathrm d\eta, & \text{otherwise.}     
   \end{cases}
\end{equation*}
Again, we apply the quadratures~\eqref{Int:GaussLegendre}, \eqref{Int:Gausslog} to approximate $J_{k,\ell}^0[m]$ and $J_{k,\ell}^1[m].$

\subsection{Exponential Convergence of the Proposed Quadrature Schemes}

In this subsection, we summarize the quality of the quadrature schemes proposed in Subsection~\ref{sec:NumHTMh}, \ref{sec:NumHTAh}, \ref{sec:NumHTBh}.

\begin{theorem} \label{NumHT:Thm:Konvergenz}
  Let the mesh \eqref{Einf:Netz} fulfill the assumption $\max_\ell h_\ell \leq T/2$. Further, choose all integration orders of the Gauß--Jacobi quadratures~\eqref{Int:Gausslog}, \eqref{Int:GaussLogTensor} such that the related integrals in Subsection~\ref{sec:NumHTMh}, \ref{sec:NumHTAh}, \ref{sec:NumHTBh} are calculated in an exact way. Then, all Gauß--Legendre quadratures~\eqref{Int:GaussLegendre}, \eqref{Int:GaussTensor} applied in Subsection~\ref{sec:NumHTMh}, \ref{sec:NumHTAh}, \ref{sec:NumHTBh} converges exponentially with respect to the Gauß integration nodes. In other words, the entries of the matrices $M^{\mathcal H_T}$, $A^{\mathcal H_T}$, $B^{\mathcal H_T}$ are computable to high float point accuracy.
\end{theorem}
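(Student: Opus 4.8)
The plan is to reduce the statement to the classical fact that the Gauß--Legendre quadrature converges exponentially for integrands that admit a holomorphic extension to a complex neighborhood of the closed integration domain, as quantified in \cite[Corollary~3.1]{SteinbachZankJNUM2021} for the one-dimensional rule \eqref{Int:GaussLegendre} and its tensor version \eqref{Int:GaussTensor}. By assumption, every Gauß--Jacobi quadrature \eqref{Int:Gausslog}, \eqref{Int:GaussLogTensor} is chosen of an order that integrates the corresponding polynomial-times-logarithm integrand exactly, so these contributions carry no quadrature error. It therefore suffices to show that in each of the cases of Subsection~\ref{sec:NumHTMh}, \ref{sec:NumHTAh}, \ref{sec:NumHTBh}, the integrand to which a Gauß--Legendre rule is applied is a product of a polynomial and a function that extends holomorphically to a polyellipse of some parameter $\rho > 1$ around the unit interval or unit square.

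First I would analyse the building blocks of the regularized kernels. The factor $\theta \mapsto \tan\frac{\pi\theta}{4T}/\theta$ appearing in $F_{1,1}^1$, $F_{1,1}^2$ and $\mathcal F_{1,1}$ has a removable singularity at $\theta = 0$ and is holomorphic and nonvanishing for $\theta$ in a complex neighborhood of $(0, 2T)$ that avoids the pole of the tangent at $\theta = 2T$; hence its logarithm is holomorphic there. Crucially, although $\abs{t-s}$ is not analytic, the quotient $\tan\frac{\pi\abs{t-s}}{4T}/\abs{s-t}$ equals the even, analytic function $\tan\frac{\pi(t-s)}{4T}/(t-s)$, so every occurrence of the absolute value is harmless. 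Consequently $F_{k,\ell}$, $F_{N,N}$, $\mathcal F_{k,\ell}$ and $\mathcal F_{N,N}$ are, on the relevant ranges of their arguments, logarithms of functions that are holomorphic and bounded away from $0$ and $\infty$, and the factor $(2T - s - t)$ in $F_{N,N}$ exactly cancels the pole of the tangent at $s + t = 2T$. After each Duffy transformation the explicitly split term $\ln(s+t)$ becomes $\ln\eta + \ln(1+\xi)$ (and symmetrically), whereby the singular part $\ln\eta$ is absorbed into \eqref{Int:Gausslog} and only the holomorphic remainder $\ln(1+\xi)$, carried by $H_3^3$, is left for \eqref{Int:GaussTensor}. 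The shape functions $\psi_m^{p}$ and their derivatives, composed with the affine and Duffy maps and multiplied by the Duffy Jacobians $\eta$ or $\xi$, remain polynomials, hence entire.

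Next I would verify that the composed integrands are holomorphic on a fixed polyellipse. The only possible obstructions are the points $s + t = 0$, $s + t = 2T$ and $s = t$. The diagonal $s = t$ no longer produces a singularity because the regularized kernels divide by $\abs{s-t}$ and the logarithm $\ln\abs{s-t}$ has been split off into \eqref{Int:GaussLogTensor}. The points $s + t \in \{0, 2T\}$ can be attained on the real domain only in the corner cases $k = \ell = 1$ and $k = \ell = N$, which is exactly where the special kernels $F_{1,1}$ and $F_{N,N}$ (respectively $\mathcal F_{1,1}$, $\mathcal F_{N,N}$) remove them; in all other cases these points lie at a positive distance from the closure of the integration domain. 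Here the mesh assumption $\max_\ell h_\ell \leq T/2$ enters: in the first element it forces $s + t \leq 2h_1 \leq T$, and in the last element $s + t \geq 2(T - h_N) \geq T$, so after the complex extension the arguments of all tangent factors stay within one branch, bounded away from the nearest pole and zero, and a single-valued logarithm with a polyellipse parameter $\rho > 1$ is available. Feeding these holomorphic integrands into \cite[Corollary~3.1]{SteinbachZankJNUM2021} (and its one-dimensional analogue for the rules in Subsection~\ref{sec:NumHTBh}) then yields the claimed exponential convergence, which by the exactness of the Gauß--Jacobi parts transfers to each matrix entry of $M^{\mathcal H_T}$, $A^{\mathcal H_T}$ and $B^{\mathcal H_T}$.

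The main obstacle I anticipate is the holomorphy verification itself: one must track, case by case and uniformly across all geometric configurations, that every singularity of the kernel \eqref{HT:Kern} (at $s+t = 0$, $s+t = 2T$ and $s = t$) is either removed by the corresponding regularization or split off into an exactly integrated logarithmic quadrature, and that the resulting branch of the logarithm extends single-valuedly to a genuine complex neighborhood of the closed integration domain. Pinning down an admissible polyellipse parameter $\rho > 1$ — and confirming that the mesh bound $T/2$ keeps the tangent arguments away from $\pi/2$ after the Duffy substitutions — is the delicate part; the remaining estimates are the routine Gauß--Legendre analyticity bounds already established in \cite{SteinbachZankJNUM2021}.
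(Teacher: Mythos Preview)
Your proposal is correct and follows exactly the approach the paper takes: the paper's proof consists of the single sentence ``We apply the same arguments as in \cite[Corollary~3.1]{SteinbachZankJNUM2021},'' and what you have written is precisely an explicit unpacking of those arguments --- splitting off the logarithmic singularities into exactly integrated Gau{\ss}--Jacobi parts, verifying that the remaining regularized integrands (via the even extension of $\tan(\pi(t-s)/4T)/(t-s)$ and the Duffy transformations) extend holomorphically to a polyellipse, and invoking the standard analyticity-based exponential convergence estimate for Gau{\ss}--Legendre rules. Your level of detail goes well beyond the paper's one-line reference, but the route is the same.
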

\begin{proof}
  We apply the same arguments as in \cite[Corollary~3.1]{SteinbachZankJNUM2021}.
\end{proof}

\section{Numerical Examples} \label{sec:Num}

In this section, we give numerical examples for the assembling of the matrices $M^{\mathcal H_T}$, $A^{\mathcal H_T}$, $B^{\mathcal H_T}$, given in \eqref{Einf:Mh}, \eqref{Einf:Ah}, \eqref{Einf:Bh}, respectively. Further, we give a numerical example for the ordinary differential equation~\eqref{Einf:Waerme} related to the heat equation, using an $h$- and $hp$-FEM as discretization schemes.

For this purpose, we fix the choice of the shape functions. In this section, we use the Lobatto polynomials (or integrated Legendre polynomials) as hierarchical shape functions on the reference element $[0,1]$. In greater detail, for a given local polynomial degree $p \in \N$, we set
\begin{equation*}
 \psi_1^p(\xi) = 1 - \xi, \qquad \psi_2^p(\xi) = \xi, \qquad \psi_m^p(\xi) = \int_0^\xi L_{m-2}(\zeta) \mathrm d\zeta \quad \text{ for } m \geq 3, \quad \xi \in [0,1],
\end{equation*}
where $L_{m}$ denotes the $m$-th Legendre polynomial on $[0,1]$, see \cite[Subsection~3.1.4]{Schwabphp1998}.

\subsection{Numerical Integration}

In this subsection, we show a numerical example for the quadrature schemes presented in Section~\ref{sec:NumHT} to calculate the entries of the matrices $M^{\mathcal H_T}$, $A^{\mathcal H_T}$, $B^{\mathcal H_T}$. For this purpose, for $T=10$, we fix the non-uniform mesh by $t_0=0$, $t_\ell = 2^{\ell-N}T$ for $\ell = 1,\dots, N$ with $N=6$ and the polynomial degree vector by $\boldsymbol{p}=(2,2,2,2,2,2)$, i.e., the degree vector is uniform. Thus, the number of the degrees of freedom is $M=13$. In this situation, we are able to calculate the matrices $M^{\mathcal H_T}$, $A^{\mathcal H_T}$, $B^{\mathcal H_T}$ as proposed in \cite[Subsection~2.2]{ZankCMAM2021}, which serve as reference values for these matrix entries. We compare these reference values with the values, calculated as proposed in Section~\ref{sec:NumHT}, when increasing the number of integration nodes $K=K_1=K_2$ of the Gauß--Legendre quadratures~\eqref{Int:GaussLegendre}, \eqref{Int:GaussTensor}. Here, we choose the orders of the Gauß--Jacobi quadratures~\eqref{Int:Gausslog}, \eqref{Int:GaussLogTensor} such that all related integrals are computed in an exact way, i.e., no additional approximation occurs. The errors are measured in $\norm{\cdot}_{\max}$ defined by $\norm{A}_{\max} := \max_{i,j} \abs{A[i,j]}$ for a matrix $A$ with entries $A[i,j]$.

In Figure~\ref{Num:Fig:NumInt}, we observe exponential convergence with respect to the number of Gauß integration nodes $K$ for the matrix entries of $M^{\mathcal H_T}$, $A^{\mathcal H_T}$, $B^{\mathcal H_T}$ calculated as proposed in Section~\ref{sec:NumHT}. Note that this exponential convergence is in accordance with Theorem~\ref{NumHT:Thm:Konvergenz}.

\begin{figure}
\begin{center}
    \includegraphics[scale=1.0]{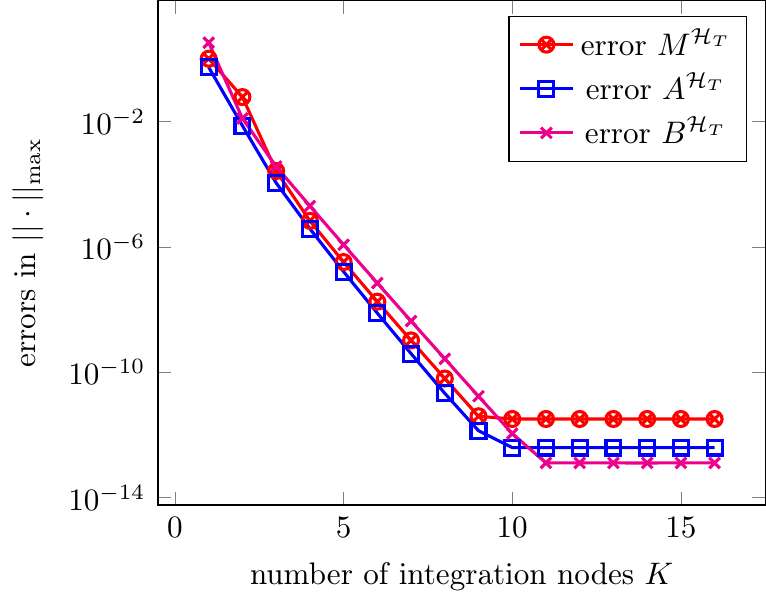}
    \caption{Numerical results of the quadrature schemes of Section~\ref{sec:NumHT} with $K$ Gauß--Legendre points per coordinate direction with a non-uniform mesh for the matrices $M^{\mathcal H_T}$, $A^{\mathcal H_T}$, $B^{\mathcal H_T}$ for piecewise quadratic functions.}
    \label{Num:Fig:NumInt}
\end{center}
\end{figure}

\subsection{ODE related to the Heat Equation}

In this subsection, we give a numerical example for the ordinary differential equation~\eqref{Einf:Waerme}, which is related to the heat equation. We use the discrete variational formulation~\eqref{Einf:WaermeVFDisk} for different choices of the meshes $\mathcal T^N$ and polynomial degrees $\boldsymbol{p}$. 

First, we apply an $h$-FEM. For given $N \in \N$, $N\geq 2$, we consider the uniform mesh defined by $t_\ell=T\frac{\ell}{N}$, $\ell=0,\dots,N$, i.e., a uniform refinement strategy is used when $N$ doubles. The vector of the polynomial degrees is uniform, i.e., $\boldsymbol{p}=(p,\dots,p)$ with $p=2$ or $p=10$.

Second, we apply an $hp$-FEM. For given $N \in \N$, $N\geq 2$, the geometric mesh is defined by $t_0=0$, $t_\ell=T \sigma^{N-\ell}$ for $\ell \in \{1,\dots,N\}$ with the
grading parameter $\sigma = 0.17$, see \cite[p. 96]{Schwabphp1998}. The vector of the polynomial degrees $\boldsymbol{p} = (p_1,\dots,p_N)$ is chosen as $p_\ell = \ell$ for $\ell \in \{1,\dots,N\}$.

Further, we set $K=K_1=K_2=20$ for the number of integration nodes of the Gauß--Legendre quadratures~\eqref{Int:GaussLegendre}, \eqref{Int:GaussTensor}. In addition, we choose the orders of the Gauß--Jacobi quadratures~\eqref{Int:Gausslog}, \eqref{Int:GaussLogTensor} such that all related integrals are computed in an exact way.

Last, we measure the error in the norm $\| \cdot \|_{H^{1/2}_{0,}(0,T)}$, which is hardly computable. Thus, we use the approximation
\begin{equation*}
  [v]_{H^{1/2}_{0,}(0,T)} := \sqrt{ \| v \|_{L^2(0,T)} \| \partial_t v \|_{L^2(0,T)} }, \quad  v \in H^1_{0,}(0,T),
\end{equation*}
which is an upper bound for $C\| \cdot \|_{H^{1/2}_{0,}(0,T)}$ with $C>0$, due to the interpolation inequality, see \cite[p. 23]{LM11968}.

In Figure~\ref{Num:Fig:GDGParabolisch}, we state the numerical results of the discrete variational formulation~\eqref{Einf:WaermeVFDisk} for the exact solution $u(t) = t^{3/4},$ $t \in [0,T]$ for $T=1$. For the $h$-FEM with $p=2$ or $p=10$, we observe the reduced convergence due to the low regularity $u \in H^{5/4-\varepsilon}(0,T)$ with $\varepsilon \in (0,1)$. For the $hp$-FEM, exponential convergence with respect to the number of degrees of freedom $M$ is achieved. Note that this exponential convergence is proven in \cite{PerugiaSchwabZankhp2022}. This numerical examples show that the proposed quadrature schemes in Section~\ref{sec:NumHT} provide the matrices $M^{\mathcal H_T}$, $A^{\mathcal H_T}$, $B^{\mathcal H_T}$ in high float point accuracy.

\begin{figure}
\begin{center}
    \includegraphics[scale=1.0]{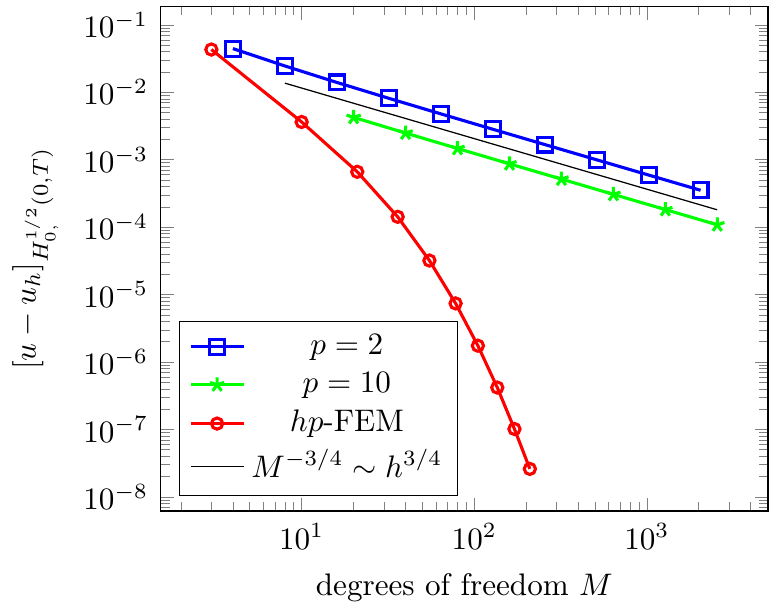}
    \caption{Numerical results of the finite element discretization \eqref{Einf:WaermeVFDisk} for an $h$-FEM with $p=1$ or $p=10$, and for an $hp$-FEM for the exact solution $u(t) = t^{3/4}$, $t \in [0,T]$, with $T=1$.}
    \label{Num:Fig:GDGParabolisch}
\end{center}
\end{figure}

\section{Conclusion} \label{sec:Zum}

The matrices $M^{\mathcal H_T}$, $A^{\mathcal H_T}$, $B^{\mathcal H_T}$ occur for finite element discretizations of the heat or wave equation. The aim of this work was to realize these matrices for arbitrary polynomial degrees, which are used for, e.g., a temporal $hp$-FEM. First, we stated weakly singular integral representations of the modified Hilbert transformation $\mathcal H_T$. These integral representations formed the basis of the quadrature schemes to calculate the matrices $M^{\mathcal H_T}$, $A^{\mathcal H_T}$, $B^{\mathcal H_T}$. All occurring singular integrals were reformulated such that they can be calculated to machine precision. In the last part, exponential convergence with respect to the number of Gauß integration nodes was observed in numerical examples. Moreover, an $hp$-FEM for an ordinary differential equation, which is related to the heat equation, showed the potential of the proposed quadrature schemes.

\bibliographystyle{acm}
\bibliography{zankliteratur}

\end{document}